\newcommand{\ZZ}{\mathbb{Z}}
\newcommand{\QQ}{\mathbb{Q}}
\newcommand{\CC}{\mathbb{C}}
\newcommand{\oo}{\mathfrak{o}}
\newcommand{\pp}{\mathfrak{p}}
\newcommand{\dd}{\mathfrak{d}}
\newcommand{\trace}{\text{Tr}}
\newcommand{\norm}{\text{N}}
\newcommand{\legendre}[2]{\left(\frac{#1}{#2}\right)}
\renewcommand{\pmod}[1]{\text{ (mod }#1)}
\newcommand{\ord}{\text{ord}}
\theoremstyle{definition}
\newtheorem{theorem}{Theorem}[section]
\newtheorem{lemma}[theorem]{Lemma}
\newtheorem{remark}[theorem]{Remark}
\numberwithin{equation}{section}
\numberwithin{table}{section}
\title[Local densities of quadratic polynomials]{Local densities of quadratic polynomials over non-dyadic fields and unramified dyadic fields}
\author{Zilong He}
\address{School of Computer Science and Technology, Dongguan University of Technology, Dongguan 523808, P.R. China}
\email{zilonghe@connect.hku.hk}
\author{Zichen Yang}
\address{Department of Mathematics, University of Hong Kong, Pokfulam, Hong Kong}
\email{zichenyang.math@gmail.com}
\thanks{The first author was supported by the National Natural Science Foundation of China (Grant No. 12301013) and
the start-up foundation of Dongguan University of Technology (Grant No. 221110236). The research of the second author was supported by the Dissertation Year Fellowship of the University of Hong Kong.}
\date{\today}
\keywords{local densities, quadratic forms, quadratic polynomials, number fields}
\subjclass[2020]{11E08, 11E25, 11E45}
\begin{document}

\begin{abstract}
In this paper, we prove formulas for local densities of quadratic polynomials over non-dyadic fields and over unramified dyadic fields.
\end{abstract}

\maketitle

\section{Introduction}

The representation problem is one of the central problems in the arithmetic theory of quadratic forms. Specifically, fix the ring of integers $\oo$ of a number field $K/\QQ$ of degree $d$ and discriminant $D_K$. For a quadratic form $Q\in\oo[x_1,\ldots,x_r]$ and an element $n\in\oo$, we hope to know if there exists a solution $(x_1,\ldots,x_r)\in\oo^r$ to the Diophantine equation $Q(x_1,\ldots,x_r)=n$. Such a solution is called a representation of $n$ by $Q$. More ideally, one may hope to have an exact formula or at least an asymptotic formula for the number of representations $r_Q(n)$ when it is finite, for example, when $K$ is totally real and $Q$ is totally positive. By Siegel's fundamental works \cite{siegel1935uber,siegel1936uber,siegel1937uber,siegel1951indefinite,siegel1951indefinite2} and the generalization by Weil \cite{weil1965formule}, in order to obtain such a formula for $r_Q(n)$, one way is to consider a weighted average $a_E(n)$ of the number of representations $r_{Q_1}(n),\ldots,r_{Q_g}(n)$, where the quadratic forms $Q_1,\ldots,Q_g$ form a complete set of representatives of the classes in the genus of $Q$. Precisely, we have 
$$
r_Q(n)=a_E(n)+a_G(n),
$$
with
$$
a_E(n)\coloneqq\frac{1}{m(Q)}\sum_{1\leq i\leq g}\frac{r_{Q_i}(n)}{|O(Q_i)|},
$$
where we denote by $m(Q)$ the mass of $Q$ and denote by $O(Q)$ the orthogonal group of $Q$, and $a_G(n)$ is the $n$-th Fourier coefficient of a fixed cusp form $G$ depending only on $Q$. 

To understand the behaviour of $a_E(n)$, Siegel proved a product formula (c.f. \cite[(5.3.2)]{shimura1999number}):
\begin{equation}
\label{eqn::siegelminkowskismith}
a_E(n)=\frac{c_r(2\pi)^{\frac{rd}{2}}N_{K/\QQ}(n)^{\frac{r}{2}-1}}{D_K^{\frac{r-1}{2}}\Gamma(r/2)^dN_{K/\QQ}(\det(Q))^{\frac{1}{2}}}\prod_{\pp\neq\infty}\beta_{\pp}(n;Q\otimes_{\oo}\oo_{\pp}),
\end{equation}
with $c_r=1$ for $r>2$ and $c_r=1/2$ for $r=2$, where $\pp$ runs over all prime ideals of $K$, and $\beta_{\pp}(n;Q)$ is the local density of a quadratic form $Q$ over $K_{\pp}$, defined in (\ref{eqn::localdensity}). 

This product formula (\ref{eqn::siegelminkowskismith}) reduces the representation problem to the problem of calculating local densities $\beta_{\pp}(n;Q)$ for an integral quadratic form $Q$ over $K_{\pp}$, which has been studied extensively by previous researchers. Siegel computed the local density for quadratic forms associated with unimodular lattices over $\QQ_p$ with $p\neq2$. In \cite{yang1998explicit}, Yang further gave an explicit formula for local densities of any quadratic forms over $\QQ_p$ for any prime number $p$. More generally, if one is interested in local densities of higher dimensional representations of quadratic forms, one may refer to \cite{katsurada1995certain,katsurada1999explicit,katsurada1997recursion,kitaoka1983note}, and in the case when $K_{\pp}=\QQ_p$ with $p\neq2$, Hironaka and Sato \cite{sato2000local} gave an explicit formula in full generality. See Kitaoka's book \cite[Section 5.6]{kitaoka1999arithmetic} for a systematic treatment. The results are less complete when $K_{\pp}\neq\QQ_p$. Shimura \cite{shimura1999number} proved an explicit formula for quadratic forms associated with maximal lattices. In \cite{hanke2004local}, Hanke gave reduction maps for computing local densities, which can be implemented as an algorithm but gives no explicit formula. Recently, Thompson \cite{thompson2024sum} used his method to calculate the local densities for sum of four squares over quadratic fields.

More generally, one may consider the representation problem of a quadratic polynomial. In this case, an extension of Siegel's product formula was given in \cite[(1.15)]{shimura2004inhomogeneous}. When $K=\QQ$, the second author with Kane \cite{kane2023finiteness} extended the explicit formula in \cite{yang1998explicit} to the case of quadratic polynomials. Because computing local densities of quadratic polynomials plays an important role in arithmetic applications \cite{kane2020universal,kane2023finiteness}, it is also reasonable to hope for a formula for local densities of quadratic polynomials.

The goal of this paper is to give explicit formulas for local densities of integral quadratic polynomials over number fields. When $K_{\pp}$ is non-dyadic, by modifying Yang's approach in \cite{yang1998explicit}, we generalize his formula \cite[Theorem 3.1]{yang1998explicit} to quadratic polynomials over $K_{\pp}$ in Theorem \ref{thm::localdensityp}. When $K_{\pp}$ is dyadic, the calculation becomes technically complicated due to the difficulty in the evaluation of Gauss integrals. When $K_{\pp}$ is unramified, using Gurak's work \cite[Section 5]{gurak2010gauss}, we compute the Gauss integrals and obtain the formulas for quadratic polynomials over unramified dyadic fields in Theorem \ref{thm::localdensity2}.

The rest of the paper is organized as follows. In Section \ref{sec::pre}, we define the local density in terms of an integral over local fields and reduce the problem using Jordan canonical form theorem. Then, we prove formulas for local densities of quadratic polynomials over non-dyadic fields in Section \ref{sec::nondyadic}, and over unramified dyadic fields in Section \ref{sec::unramifieddyadic}.

\section{Preliminaries}
\label{sec::pre}

Since the discussion from now on is purely local, by abuse of notation, we denote by $K$ a finite extension over the $p$-adic field $\QQ_p$ of ramification index $e$ and inertial degree $f$, where $p$ is a rational prime number. We denote by $\oo$, $\pp$, $\kappa$ the ring of integers, the unique prime ideal, and the residue class field of $K$, respectively. The trace map and the norm map from $K$ to $\QQ_p$ is denoted by $\trace=\trace_{K/\QQ_p}$ and $\norm=\norm_{K/\QQ_p}$, respectively. Let $\pp^{\dd}$ be the different of the extension $K/\QQ_p$ for an integer $\dd\in\ZZ$, which is the inverse of the codifferent of the extension $K/\QQ_p$ given by
\begin{equation}
\label{eqn::codifferent}
\pp^{-\dd}=\left\{x\in K~|~\trace_{K/\QQ_p}(x\oo)\subseteq\ZZ_p\right\}.
\end{equation}
For a uniformizer $\pi$ of $K$, we define an exponential function $e_{\pi}\colon K\to\CC$ by
$$
e_{\pi}(\alpha)\coloneqq e_p\left(\trace_{K/\QQ_p}(\alpha/\pi^{\dd})\right),
$$
for any element $\alpha\in K$, where for any element $\alpha\in\QQ_p$, we set $e_p(\alpha)\coloneqq e^{-2\pi ia}$ with $a\in\bigcup_{t\geq1}p^{-t}\ZZ$ such that $\alpha-a\in\ZZ_p$. The function $e_{\pi}$ has the following properties:
\begin{enumerate}
\item For any elements $\alpha_1,\alpha_2\in K$, we have $e_{\pi}(\alpha_1+\alpha_2)=e_{\pi}(\alpha_1)e_{\pi}(\alpha_2)$,
\item If $\alpha\in\oo$ then $e_{\pi}(\alpha)=1$,
\item If $\alpha\not\in\oo$, then there exists an element $\beta\in\oo$ such that $e_{\pi}(\alpha\beta)\neq1$,
\end{enumerate}
which will be used frequently throughout this paper.

\subsection{Local densities of quadratic polynomials}

Fix an integer $r\geq1$. Put $\mathbf{x}\coloneqq(x_1,\ldots,x_r)^{\mathrm{t}}$. Any quadratic polynomial $Q(\mathbf{x})$ in $r$ variables over $K$ is of the form
\begin{equation}
\label{eqn::quadraticpolynomial}
Q(\mathbf{x})=\mathbf{x}^{\mathrm{t}}A\mathbf{x}+\mathbf{b}^{\mathrm{t}}\mathbf{x}+c,
\end{equation}
where $A\in K^{r\times r}$ is non-singular and symmetric, $\mathbf{b}\in K^r$, and $c\in K$. In particular, if $Q(\mathbf{x})$ is a quadratic form, then $\mathbf{b}=\mathbf{0}$ and $c=0$. We adopt the non-classical notion of integrality, that is, a quadratic polynomial $Q$ is called integral if $Q(\mathbf{x})\in\oo[\mathbf{x}]$.

Since $K$ is locally compact, there exists a unique Haar measure $\mathrm{d}x$ on $K$, which is normalized so that the area of $\oo$ is $1$, that is, $\int_{\oo}\mathrm{d}x=1$. Since a Haar measure is invariant under translation, we have $\mathrm{d}(\alpha+x)=\mathrm{d}x$ and $\mathrm{d}(\alpha x)=\norm(\pp)^{-\ord_{\pp}(\alpha)}\mathrm{d}x$ for $\alpha\in K^{\times}$. Therefore we have $\int_{\pp^t}\mathrm{d}x=\norm(\pp)^{-t}$ for $t\in\ZZ$. For $n\in\oo$, the local density of a quadratic polynomial $Q$ at $\pp$ is defined by
\begin{equation}
\label{eqn::localdensity}
\beta_{\pp}(n;Q)\coloneqq\int_{K}\int_{\oo^r}e_{\pi}\left(\sigma(Q(x_1,\ldots,x_r)-n)\right)\mathrm{d}x_1\cdots\mathrm{d}x_r\mathrm{d}\sigma,
\end{equation}
with $\mathrm{d}x_1=\cdots=\mathrm{d}x_r=\mathrm{d}\sigma=\mathrm{d}x$. Note that the integral is independent of the choice of the uniformizer $\pi$. 

Throughout this paper, we denote by $\chi_S$ the characteristic function of a subset $S\subseteq K$, that is, $\chi_S(\sigma)=1$ if $\sigma\in S$ and $\chi_S(\sigma)=0$ otherwise.

\begin{lemma}
\label{thm::padiccirclemethod}
For $\sigma\in K$ and $t\in\ZZ$, we have
$$
\int_{\pp^t}e_{\pi}(\sigma x)\mathrm{d}x=\frac{\chi_{\pp^{-t}}(\sigma)}{\norm(\pp)^t}.
$$
\end{lemma}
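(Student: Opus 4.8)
The plan is to run the standard character-orthogonality dichotomy, splitting on whether or not $\sigma$ lies in $\pp^{-t}$, and to extract the vanishing in the complementary case from translation invariance of the Haar measure together with property (3) of $e_{\pi}$.

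First suppose $\sigma\in\pp^{-t}$. Then for every $x\in\pp^t$ we have $\sigma x\in\pp^{-t}\pp^t=\oo$, so property (2) of $e_{\pi}$ gives $e_{\pi}(\sigma x)=1$ on the whole domain of integration; hence the integral equals $\int_{\pp^t}\mathrm{d}x=\norm(\pp)^{-t}$, using the normalization recorded just before the lemma. This matches $\chi_{\pp^{-t}}(\sigma)\norm(\pp)^{-t}$.

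Now suppose $\sigma\notin\pp^{-t}$, i.e.\ $\sigma\pi^t\notin\oo$. By property (3) of $e_{\pi}$ there exists $\beta\in\oo$ with $e_{\pi}(\sigma\pi^t\beta)\neq1$; set $y\coloneqq\pi^t\beta\in\pp^t$. Since $\pp^t$ is an additive subgroup of $K$ and the Haar measure is translation invariant, the substitution $x\mapsto x+y$ fixes both the domain $\pp^t$ and the measure, so that, writing $I$ for the integral in question and using property (1),
$$
I=\int_{\pp^t}e_{\pi}\bigl(\sigma(x+y)\bigr)\mathrm{d}x=e_{\pi}(\sigma y)\int_{\pp^t}e_{\pi}(\sigma x)\mathrm{d}x=e_{\pi}(\sigma y)\,I.
$$
Because $e_{\pi}(\sigma y)=e_{\pi}(\sigma\pi^t\beta)\neq1$, this forces $I=0=\chi_{\pp^{-t}}(\sigma)\norm(\pp)^{-t}$, completing the proof.

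There is essentially no serious obstacle here; the only point that requires a moment's care is to produce the auxiliary element $\beta$ \emph{scaled into} $\pp^t$ rather than merely in $\oo$, which is why one applies property (3) to $\sigma\pi^t$ instead of to $\sigma$ directly. Everything else is the formal character-sum argument, and the final answer assembles the two cases via $\chi_{\pp^{-t}}$.
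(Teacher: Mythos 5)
Your proof is correct and follows essentially the same route as the paper's: the case $\sigma\in\pp^{-t}$ is immediate from property (2), and the vanishing case uses property (3) together with translation invariance of the Haar measure. The only cosmetic difference is that the paper first rescales by $x\mapsto\pi^t x$ to reduce to the integral over $\oo$, whereas you work directly on $\pp^t$ by scaling the auxiliary element into $\pp^t$; the two are interchangeable.
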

\begin{proof}
Set
$$
I(\sigma)\coloneqq\int_{\oo}e_{\pi}(\sigma x)\mathrm{d}x.
$$
Replacing $x$ by $\pi^tx$, we have
$$
\int_{\pp^{t}}e_{\pi}(\sigma x)\mathrm{d}x=\frac{1}{\norm(\pp)^t}\int_{\oo}e_{\pi}(\sigma \pi^tx)\mathrm{d}x=\frac{I(\sigma\pi^t)}{\norm(\pp)^{t}}.
$$
So it suffices to show that $I(\sigma)=\chi_{\oo}(\sigma)$. If $\sigma\in\oo$, then $I(\sigma)=1$. If $\sigma\not\in\oo$, then $\sigma/\pi^{\dd}\not\in\pp^{-\dd}$. It follows that there exists some $y\in\oo$ such that $e_{\pi}(\sigma y)\neq1$. Replacing $x$ by $x+y$, we see that
$$
I(\sigma)=e_{\pi}(\sigma y)I(\sigma),
$$
which implies $I(\sigma)=0$, as desired.
\end{proof}

\begin{lemma}
\label{thm::sumcirclemethod}
For $\sigma\in\oo$, we have
$$
\sum_{x\in\oo/\pp}e_{\pi}\left(\frac{\sigma x}{\pi}\right)=\norm(\pp)\chi_{\pp}(\sigma).
$$
\end{lemma}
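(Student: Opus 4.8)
The plan is to prove this as a standard orthogonality (character-sum) identity over the residue field, using only the three listed properties of $e_\pi$. First I would check that the summand is well defined on $\oo/\pp$: if $x'\equiv x\pmod{\pp}$, write $x'=x+\pi y$ with $y\in\oo$; then by property (1), $e_\pi(\sigma x'/\pi)=e_\pi(\sigma x/\pi)e_\pi(\sigma y)$, and $e_\pi(\sigma y)=1$ by property (2) since $\sigma y\in\oo$. With the sum thus well defined, I would split into the two cases according to whether $\sigma\in\pp$.

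If $\sigma\in\pp$, then $\sigma/\pi\in\oo$, so $\sigma x/\pi\in\oo$ for every $x\in\oo$, and each summand equals $1$ by property (2); hence the sum equals $|\oo/\pp|=\norm(\pp)$, which matches the right-hand side since $\chi_\pp(\sigma)=1$.

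If $\sigma\notin\pp$, then $\sigma$ is a unit, so $\sigma/\pi\notin\oo$, and by property (3) there exists $\beta\in\oo$ with $e_\pi\big((\sigma/\pi)\beta\big)\neq1$. Writing $S$ for the sum and translating the index $x\mapsto x+\beta$ (a bijection of $\oo/\pp$), property (1) gives $S=e_\pi(\sigma\beta/\pi)\,S$, whence $\big(1-e_\pi(\sigma\beta/\pi)\big)S=0$ and therefore $S=0=\norm(\pp)\chi_\pp(\sigma)$.

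There is no genuine obstacle here; the only points needing a moment's care are the well-definedness of the summand and, in the unit case, the appeal to property (3) to produce a translation that actually moves the sum. As an alternative that avoids the case split, one can instead deduce the identity directly from Lemma~\ref{thm::padiccirclemethod}: partitioning $\oo=\bigsqcup_{x\in\oo/\pp}(x+\pp)$ and integrating $e_\pi\big((\sigma/\pi)y\big)$ over $\oo$ coset by coset, property (1) factors each coset integral as $e_\pi(\sigma x/\pi)\int_{\pp}e_\pi\big((\sigma/\pi)z\big)\,\mathrm{d}z$; applying Lemma~\ref{thm::padiccirclemethod} with $t=1$ to the inner integral and with $t=0$ to the integral over $\oo$ yields $\chi_\oo(\sigma/\pi)=S\cdot\chi_{\pp^{-1}}(\sigma/\pi)/\norm(\pp)$, and since $\sigma\in\oo$ forces $\sigma/\pi\in\pp^{-1}$ this rearranges to $S=\norm(\pp)\chi_\oo(\sigma/\pi)=\norm(\pp)\chi_\pp(\sigma)$.
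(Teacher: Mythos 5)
Your primary argument is correct, but it takes a genuinely different route from the paper. You work entirely at the level of the finite sum: after checking that the summand is well defined on $\oo/\pp$, you split on whether $\sigma\in\pp$, and in the unit case you use property (3) of $e_{\pi}$ to produce a $\beta\in\oo$ with $e_{\pi}(\sigma\beta/\pi)\neq1$ and kill the sum by the translation $x\mapsto x+\beta$. This is a self-contained orthogonality argument that never touches the Haar measure. The paper instead deduces the identity from Lemma~\ref{thm::padiccirclemethod} by evaluating $\int_{\pp^{-1}}e_{\pi}(\sigma x)\,\mathrm{d}x$ in two ways: once by decomposing $\pp^{-1}$ into the cosets $a/\pi+\oo$ (which produces the sum in question, since $\int_{\oo}e_{\pi}(\sigma x)\,\mathrm{d}x=1$ for $\sigma\in\oo$), and once directly with $t=-1$. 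Your closing alternative --- integrating over $\oo$ and splitting into cosets of $\pp$ --- is the paper's proof up to a rescaling of the domain of integration. What each approach buys: yours is more elementary and makes the character-theoretic content transparent (it is the same translation trick the paper itself uses inside the proof of Lemma~\ref{thm::padiccirclemethod}), while the paper's is shorter on the page because it recycles Lemma~\ref{thm::padiccirclemethod} and avoids redoing the vanishing argument by hand. Both are complete; no gaps.
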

\begin{proof}
By Lemma \ref{thm::padiccirclemethod}, we have
$$
\int_{\pp^{-1}}e_{\pi}(\sigma x)\mathrm{d}x=\sum_{a\in\oo/\pp}\int_{a/\pi+\oo}e_{\pi}(\sigma x)\mathrm{d}x=\sum_{a\in\oo/\pp}e_{\pi}\left(\frac{\sigma a}{\pi}\right)\int_{\oo}e_{\pi}(\sigma x)\mathrm{d}x=\sum_{a\in\oo/\pp}e_{\pi}\left(\frac{\sigma a}{\pi}\right).
$$
Therefore, applying Lemma \ref{thm::padiccirclemethod} again with $t=-1$, we obtain the desired result.
\end{proof}

\subsection{Jordan canonical forms}

By definition, the local densities are invariant if we apply an invertible linear transformation in $\text{GL}_r(\oo)$ to the quadratic polynomial. Therefore we may assume that the quadratic polynomial is given in a reduced form. This is achieved in the following lemma.

\begin{lemma}
\label{thm::jordan}
Let $Q$ be an integral quadratic form over $K$. The following results hold:
\begin{enumerate}
\item If $K$ is non-dyadic, then there exists an invertible linear transformation $T\in\text{GL}_r(\oo)$ such that $Q\circ T$ is an integral diagonal form.
\item If $K$ is dyadic and unramified, then there exists an invertible linear transformation $T\in\text{GL}_r(\oo)$ such that $Q\circ T$ is a sum of quadratic form $\pi^{t_i}Q_i$ with $t_i\geq0$, where $Q_i$ is of the form $ux^2$, $uxy$, or $u(x^2+xy+\rho y^2)$ with $u\in\oo^{\times}$, where $\rho\in\oo^{\times}$ is a fixed unit depending on the field $K$. 
\end{enumerate}
\end{lemma}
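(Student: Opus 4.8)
The plan is to invoke the Jordan splitting theorem for quadratic forms (equivalently, lattices) over a local field, which is standard and can be found, e.g., in O'Meara's \emph{Introduction to Quadratic Forms} (\S91--93) or in Kitaoka's book. First I would restate the hypothesis: $Q$ corresponds to an $\oo$-lattice $L = \oo^r$ equipped with the quadratic map $\mathbf{x} \mapsto \mathbf{x}^{\mathrm t} A \mathbf{x}$, which is integral in the non-classical sense ($Q(L) \subseteq \oo$), equivalently the associated symmetric bilinear form $B(\mathbf{x},\mathbf{y}) = \tfrac12\mathbf{x}^{\mathrm t} A \mathbf{y}$ may take values in $\tfrac12\oo$. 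The Jordan splitting theorem gives an orthogonal decomposition $L = L_1 \perp L_2 \perp \cdots \perp L_k$ into modular components $L_i$ scaled by $\pp^{t_i}$ with $0 \le t_1 < t_2 < \cdots$; translating back, $Q \circ T$ is a sum of pieces $\pi^{t_i} Q_i$ with $Q_i$ unimodular. So the content of the lemma is to identify the possible unimodular pieces $Q_i$ in each residue-characteristic regime.

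For part (1), when $K$ is non-dyadic ($2 \in \oo^\times$), I would diagonalize each unimodular component. Here $B$ is $\oo$-valued (since $2$ is a unit), so the standard Gram--Schmidt argument over $\oo$ works: pick a vector $v$ with $Q(v) \in \oo^\times$ — such a vector exists in a unimodular lattice over a non-dyadic field because the reduction mod $\pp$ is a nondegenerate quadratic space over the residue field of odd characteristic, which represents a nonzero value — split off $\oo v$ orthogonally, and induct on the rank. Chaining these transformations over all Jordan components and clearing denominators in the uniformizer produces the integral diagonal form $Q \circ T = \sum_i u_i \pi^{s_i} x_i^2$ with $u_i \in \oo^\times$, $s_i \ge 0$.

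For part (2), when $K$ is dyadic and unramified ($e = 1$, so $\pi = 2$ up to a unit, and $\kappa = \FF_{2^f}$), diagonalization can fail and one must allow binary hyperbolic-type and "anisotropic" blocks. I would again work component-by-component on a unimodular lattice $M$. The classification of unimodular lattices over an unramified dyadic local field (O'Meara \S93, or the structure theory of quadratic forms over dyadic fields) says $M$ splits into an orthogonal sum of: rank-one blocks $u x^2$ with $u \in \oo^\times$ (when $M$ has a vector of unit norm that can be split off — governed by whether $Q$ is "unimodular with a unit value", i.e. the form is not contained in the even sublattice after localization); hyperbolic planes $u x y$ with $u \in \oo^\times$; and one "norm-form" block $u(x^2 + xy + \rho y^2)$, where $\rho \in \oo^\times$ is a fixed unit whose reduction $\bar\rho \in \FF_{2^f}$ has the property that $t^2 + t + \bar\rho$ is irreducible (i.e. $\mathrm{Tr}_{\FF_{2^f}/\FF_2}(\bar\rho) = 1$), so that $x^2 + xy + \rho y^2$ is the anisotropic binary form. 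The key input is that over an \emph{unramified} dyadic field every unimodular quadratic lattice is, up to scaling by a unit, an orthogonal sum of copies of $\langle u \rangle$, the hyperbolic plane $H$, and the anisotropic plane $A_\rho$ — this is exactly where unramifiedness is used, since in the ramified case there are further "improper" unimodular lattices requiring blocks like $2x^2 + 2xy + 2y^2$ with extra scaling subtleties.

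The main obstacle — really the only nonroutine point — is justifying the precise list of binary building blocks in part (2) and in particular that a \emph{single} fixed $\rho$ (depending only on $K$, not on $Q$) suffices for all the anisotropic blocks, and that at most one such anisotropic block appears per Jordan component. This follows from the classification of nondegenerate quadratic spaces over the residue field $\FF_{2^f}$: up to isometry there is exactly one anisotropic binary form (the norm form of the quadratic extension $\FF_{2^{2f}}/\FF_{2^f}$), represented by $t^2 + t + \bar\rho$ for any $\bar\rho$ of trace $1$, and an orthogonal sum of two such anisotropic planes is hyperbolic — so repeated anisotropic blocks collapse, in pairs, into hyperbolic planes $uxy$, leaving at most one. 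Lifting this residue-field classification to the $\oo$-lattice $M$ via a Hensel-type argument (graph/successive-approximation, as in O'Meara's proof of the dyadic Jordan splitting) gives the stated normal form, and then reassembling the Jordan components and absorbing uniformizer powers as the $\pi^{t_i}$ completes the proof.
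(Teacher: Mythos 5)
Your proposal is correct and follows essentially the same route as the paper: a Jordan splitting into modular components followed by the classification of unimodular lattices over non-dyadic respectively unramified dyadic local fields (the paper simply cites Hanke's Lemma~2.1 and O'Meara 93:11 for the steps you reconstruct in detail, including the reduction of the even unimodular binary blocks to $uxy$ or $u(x^2+xy+\rho y^2)$ with a single fixed $\rho$). No gaps.
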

\begin{proof}
The assertion (1) was proved \cite[Lemma 2.1]{hanke2004local}. To prove the assertion (2), since $K$ is unramified, by \cite[Lemma 2.1]{hanke2004local}, there exists an invertible linear transformation $T\in\text{GL}_r(\oo)$ such that $Q\circ T$ is a sum of forms $\pi^{t_i}Q_i$ with $t_i\geq0$, where $Q_i$ is either of the form $ux^2$ with $u\in\oo^{\times}$ or of the form $u(ax^2+xy+by^2)$ with $a,b\in\oo$ and $u\in\oo^{\times}$ such that $\ord_{\pp}(a)=\ord_{\pp}(b)$. Using \cite[93:11]{o2013introduction}, since $2ax^2+2xy+2by^2$ is an even unimodular quadratic form, by composing an invertible linear transformation in $\text{GL}_r(\oo)$, we can convert it into either $2xy$ or $2x^2+2xy+2\rho y^2$ with a unit $\rho\in\oo^{\times}$. This proves the assertion (2).
\end{proof}
\begin{remark}
The unit $\rho$ is chosen so that the quadratic defect of $1+4\rho$ is $4\oo$. See \cite[\S 93]{o2013introduction} for details. By \cite[63:3]{o2013introduction}, we can take $\rho=1$ when $f$ is odd. When $f=2$, we can take $\rho=\zeta$, where $\zeta$ is a primitive cubic root of unity.
\end{remark}

\section{Non-dyadic cases}
\label{sec::nondyadic}

In this section, we give an explicit formula for the local densities when $K$ is non-dyadic. First, we have to study Gauss sums defined by
$$
G_{\pi}(\sigma)\coloneqq\sum_{x\in\kappa}e_{\pi}\left(\frac{\sigma x^2}{\pi}\right),
$$
with $\sigma\in\oo^{\times}$. For $x\in\oo$, we define the generalized Legendre symbol by
$$
\legendre{x}{\pp}\coloneqq
\begin{dcases}
1,&\text{ if }x\in\oo^{\times},x\equiv y^2\pmod{\pp}\text{ for some }y\in\oo^{\times},\\
-1,&\text{ if }x\in\oo^{\times},x\not\equiv y^2\pmod{\pp}\text{ for any }y\in\oo^{\times},\\
0,&\text{ if }x\in\pp.
\end{dcases}
$$

The following properties of Gauss sums are standard.

\begin{lemma}
\label{thm::gausssumproperty}
For $\sigma\in\oo^{\times}$, we have
\begin{equation}
\label{eqn::firstformula}
G_{\pi}(\sigma)=\sum_{x\in\kappa^{\times}}\legendre{x}{\pp}e_{\pi}\left(\frac{\sigma x}{\pi}\right),  
\end{equation}
and
\begin{equation}
\label{eqn::secondformula}
G_{\pi}(\sigma)=\epsilon_{\pi}^3\legendre{\sigma}{\pp}\norm(\pp)^{\frac{1}{2}},
\end{equation}
with $\epsilon_{\pi}\in\{\pm1,\pm i\}$.
\end{lemma}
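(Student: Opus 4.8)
The plan is to reduce both identities to the classical theory of quadratic Gauss sums over the finite residue field $\kappa$, with Lemma~\ref{thm::sumcirclemethod} playing the role of the orthogonality relation for additive characters of $\kappa$; the non-dyadic hypothesis enters only through the fact that $\kappa$ has odd order. First I would prove (\ref{eqn::firstformula}): peeling off the $x=0$ term of $G_\pi(\sigma)$, which contributes $1$, and using that the squaring map on $\kappa^\times$ is two-to-one onto the nonzero squares — so each $y\in\kappa^\times$ has exactly $1+\legendre{y}{\pp}$ square roots in $\kappa^\times$ — rewrites $G_\pi(\sigma)$ as $1+\sum_{y\in\kappa^\times}\bigl(1+\legendre{y}{\pp}\bigr)e_\pi(\sigma y/\pi)$. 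Since $\sigma\in\oo^\times$, Lemma~\ref{thm::sumcirclemethod} gives $\sum_{y\in\kappa}e_\pi(\sigma y/\pi)=0$, hence $\sum_{y\in\kappa^\times}e_\pi(\sigma y/\pi)=-1$, and the two surplus terms cancel, leaving (\ref{eqn::firstformula}). Next, substituting $x\mapsto\sigma^{-1}x$ in (\ref{eqn::firstformula}) and using multiplicativity of $\legendre{\cdot}{\pp}$ yields $G_\pi(\sigma)=\legendre{\sigma}{\pp}G_\pi(1)$, so it remains to pin down $G_\pi(1)$.

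For the modulus I would expand $|G_\pi(1)|^2=\sum_{x,y\in\kappa^\times}\legendre{x}{\pp}\legendre{y}{\pp}\,e_\pi((x-y)/\pi)$ and substitute $x=yz$, which turns the sum into $\sum_{z\in\kappa^\times}\legendre{z}{\pp}\sum_{y\in\kappa^\times}e_\pi(y(z-1)/\pi)$; the inner sum equals $\norm(\pp)-1$ for $z=1$ and $-1$ otherwise by Lemma~\ref{thm::sumcirclemethod}, and $\sum_{z\in\kappa^\times}\legendre{z}{\pp}=0$ then collapses everything to $|G_\pi(1)|^2=\norm(\pp)$. To locate the phase, note $\overline{G_\pi(1)}=G_\pi(-1)=\legendre{-1}{\pp}G_\pi(1)$, so $G_\pi(1)$ is real when $\legendre{-1}{\pp}=1$ and purely imaginary when $\legendre{-1}{\pp}=-1$; together with $|G_\pi(1)|=\norm(\pp)^{1/2}$ this forces $G_\pi(1)/\norm(\pp)^{1/2}\in\{\pm1,\pm i\}$. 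Since cubing permutes $\{\pm1,\pm i\}$, there is a unique $\epsilon_\pi$ in this set with $\epsilon_\pi^3=G_\pi(1)/\norm(\pp)^{1/2}$, and combining this with $G_\pi(\sigma)=\legendre{\sigma}{\pp}G_\pi(1)$ gives (\ref{eqn::secondformula}).

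None of this is genuinely hard — it is the standard quadratic Gauss sum computation — but the step I would be most careful about is the double-sum manipulation for $|G_\pi(1)|^2$: one must check that $x=yz$ is a bijection of $\kappa^\times\times\kappa^\times$ onto itself and that $z-1$ is a unit precisely when $z\neq1$, so that Lemma~\ref{thm::sumcirclemethod} applies term by term. I would also note in passing that the value $e_\pi(\sigma x/\pi)$ for $x\in\kappa$ is well defined, since changing the lift of $x$ by an element of $\pp$ alters $\sigma x/\pi$ by an element of $\oo$. Both the identity $\sum_{z\in\kappa^\times}\legendre{z}{\pp}=0$ and the two-to-one property of squaring fail when $\kappa$ has characteristic $2$, which is exactly why the dyadic case is handled separately later.
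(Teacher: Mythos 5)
Your proof is correct and follows essentially the same route as the paper's: counting square roots via $1+\legendre{x}{\pp}$ together with the orthogonality relation from Lemma~\ref{thm::sumcirclemethod} for (\ref{eqn::firstformula}), reduction to $G_{\pi}(1)$, and the standard double-sum manipulation for its size. The only cosmetic difference is that you evaluate $|G_{\pi}(1)|^{2}=\norm(\pp)$ and pin down the phase via $\overline{G_{\pi}(1)}=\legendre{-1}{\pp}G_{\pi}(1)$, whereas the paper computes $G_{\pi}(1)^{2}=\legendre{-1}{\pp}\norm(\pp)$ directly and takes square roots; the two carry the same information, and your version makes the fourth-root-of-unity ambiguity slightly more explicit.
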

\begin{proof}
By Lemma \ref{thm::sumcirclemethod}, we have 
\begin{equation}
\label{eqn::orthogonality}
\sum_{x\in\kappa}e_{\pi}\left(\frac{\sigma x}{\pi}\right)=0.
\end{equation}
Since $\kappa^{\times}$ is cyclic of an even order, by (\ref{eqn::orthogonality}), we see that
$$
G_{\pi}(\sigma)=\sum_{x\in\kappa}\left(1+\legendre{x}{\pp}\right)e_{\pi}\left(\frac{\sigma x}{\pi}\right)=\sum_{x\in\kappa}\legendre{x}{\pp}e_{\pi}\left(\frac{\sigma x}{\pi}\right)=\sum_{x\in\kappa^{\times}}\legendre{x}{\pp}e_{\pi}\left(\frac{\sigma x}{\pi}\right).
$$
This finishes the proof of (\ref{eqn::firstformula}). It follows that
$$
G_{\pi}(\sigma)=\legendre{\sigma}{\pp}G_{\pi}(1).
$$
So to prove (\ref{eqn::secondformula}), it suffices to calculate $G_{\pi}(1)^2$. Using (\ref{eqn::orthogonality}) again, we see that
$$
G_{\pi}(1)^2=\sum_{x,y\in\kappa}\legendre{xy}{\pp}e_{\pi}\left(\frac{x+y}{\pi}\right)=\sum_{y\in\kappa}\legendre{y}{\pp}\sum_{x\in\kappa}e_{\pi}\left(\frac{x(1+y)}{\pi}\right)=\legendre{-1}{\pp}\norm(\pp).
$$
Thus (\ref{eqn::secondformula}) follows from taking square roots of both sides.
\end{proof}
\begin{remark}
One may replace $\epsilon_{\pi}^3$ by $\epsilon_{\pi}$ with a different choice of $\epsilon_{\pi}$. Our choice guarantees that when $K=\QQ_p$ and $\pi=p$, we have
$$
\epsilon_p=
\begin{dcases}
1,&\text{ if }p\equiv1\pmod{4},\\
i,&\text{ if }p\equiv3\pmod{4},
\end{dcases}
$$
which is aligned with the formula in \cite[(1.10)]{yang1998explicit}. In general, the value of the fourth root of unity $\epsilon_{\pi}$ can be determined by \cite[Theorem 6.1]{szechtman2002quadratic}.
\end{remark}

By Lemma \ref{thm::jordan}, we see that any integral quadratic form can be converted to a diagonal form by applying an invertible linear transformation. Applying this theorem to a quadratic polynomial gives a diagonal form with linear terms and a constant term. We may further assume that the constant term is zero because $\beta_{\pp}(n;Q)=\beta_{\pp}(n-c;Q-c)$ for $c\in\oo$. Overall, we can suppose that the quadratic polynomial is given by
$$
Q(\mathbf{x})=\sum_{1\leq i\leq r}(b_ix_i^2+c_ix_i),
$$
with $b_1,c_1,\ldots,b_r,c_r\in\oo$. It follows that we can rewrite the local density of $Q$ by
$$
\beta_{\pp}(n;Q)=\int_{K}e_{\pi}\left(-\sigma n\right)\prod_{i=1}^rI_{\pi}(\sigma b_i,\sigma c_i)\mathrm{d}\sigma,
$$
where we set
\begin{equation}
\label{eqn::gaussintegral1}
I_{\pi}(\sigma,\tau)\coloneqq\int_{\oo}e_{\pi}\left(\sigma x^2+\tau x\right)\mathrm{d}x.
\end{equation}

First we give a formula for $I_{\pi}(\sigma,0)$.

\begin{lemma}
\label{thm::gaussintegral1p}
For $\sigma=u\pi^t\in K^{\times}$ with $u\in\oo^{\times}$ and $t\in\ZZ$, we have
$$
I_{\pi}(\sigma,0)=
\begin{dcases}
1,&\text{ if }t\geq0,\\
\norm(\pp)^{\frac{t}{2}},&\text{ if }t<0,t\text{ is even},\\
\epsilon_{\pi}^3\legendre{u}{\pp}\norm(\pp)^{\frac{t}{2}},&\text{ if }t<0,t\text{ is odd}.
\end{dcases}
$$
\end{lemma}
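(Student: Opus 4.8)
The plan is to evaluate the Gauss integral $I_\pi(\sigma,0)=\int_{\oo}e_\pi(\sigma x^2)\,\mathrm{d}x$ by splitting into cases according to the sign and parity of $t=\ord_\pp(\sigma)$, reducing in each case to either the trivial observation that $e_\pi$ is identically $1$ on $\oo$, or to the classical Gauss sum $G_\pi(\cdot)$ computed in Lemma \ref{thm::gausssumproperty}.

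\textbf{Case $t\ge 0$.} Here $\sigma x^2\in\oo$ for all $x\in\oo$, so by property (2) of $e_\pi$ the integrand is constant equal to $1$, and $\int_{\oo}\mathrm{d}x=1$ by the normalization of the Haar measure. This gives the first line.

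\textbf{Case $t<0$.} The idea is a standard stratification of $\oo$ by residues modulo $\pp^{\lceil -t/2\rceil}$ (roughly ``half'' of $\pp^{-t}$), completing the square in the exponent. Write $x=a+\pi^k y$ with $a$ running over $\oo/\pp^k$ and $y\in\oo$, where $k$ is chosen so that the cross term $2\sigma\pi^k a y$ lands in $\oo$ (for $t$ even take $k=-t/2$; for $t$ odd take $k=(-t+1)/2$). Then $e_\pi(\sigma x^2)=e_\pi(\sigma a^2)e_\pi(2\sigma\pi^k ay)e_\pi(\sigma\pi^{2k}y^2)$; the last factor is $1$ since $2k\ge -t$, and integrating over $y\in\oo$ the middle factor collapses via Lemma \ref{thm::padiccirclemethod} to a condition forcing $\sigma\pi^k\cdot 2a$, equivalently $a$, to lie in an appropriate ideal, with a normalizing factor $\norm(\pp)^{-k}$. (Since $K$ is non-dyadic, $2\in\oo^\times$, so dividing by $2$ is harmless — this is where the non-dyadic hypothesis is used.) When $t=2m$ is even one finds $a\in\pp^{m}$, so the surviving sum is a single term $e_\pi(\sigma a^2)=1$, and one is left with $\norm(\pp)^{m}\cdot\norm(\pp)^{-k}=\norm(\pp)^{m}\cdot\norm(\pp)^{-m}$ — wait, one must track the indices carefully: the point is that the residue classes $a$ that survive, rescaled, range over $\kappa$ (or a single class), and the leftover normalization yields exactly $\norm(\pp)^{t/2}$. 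When $t=2m-1$ is odd, after the same reduction the surviving inner sum is, up to rescaling $a=\pi^{m-1}a'$, precisely $\sum_{a'\in\kappa}e_\pi(\sigma\pi^{2m-2}a'^2/\cdots)$ which matches the Gauss sum $G_\pi(u)=\sum_{x\in\kappa}e_\pi(ux^2/\pi)$ (after absorbing the unit correctly), and formula \eqref{eqn::secondformula} gives $G_\pi(u)=\epsilon_\pi^3\legendre{u}{\pp}\norm(\pp)^{1/2}$; combined with the normalization factors $\norm(\pp)^{-k}$ this produces $\epsilon_\pi^3\legendre{u}{\pp}\norm(\pp)^{t/2}$.

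The main obstacle is purely bookkeeping: choosing the stratification depth $k$ correctly so that the cross term vanishes but no power is wasted, and then tracking the accumulated factors of $\norm(\pp)^{-1}$ against the size of the surviving residue set so that the half-integer power $\norm(\pp)^{t/2}$ comes out with the right exponent and, in the odd case, correctly identifying the leftover sum with $G_\pi(u)$ (in particular getting the unit inside the Legendre symbol right, rather than $\legendre{\sigma}{\pp}$, since $\legendre{\pi^t}{\pp}$ contributes nothing when... actually $\legendre{u}{\pp}$ is what appears because the scaling by $\pi$'s is absorbed into the measure, not the symbol). A clean way to organize this is to induct on $t$ downward, using the substitution $x\mapsto \pi x$ together with a direct evaluation of the ``level one'' integral $\int_\oo e_\pi(\sigma x^2)\,\mathrm{d}x$ for $\ord_\pp(\sigma)=-1$ and $\ord_\pp(\sigma)=-2$ as base cases, the former being essentially $\norm(\pp)^{-1}G_\pi(u)$ by the same splitting argument as in Lemma \ref{thm::sumcirclemethod}.
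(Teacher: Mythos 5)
Your proposal is correct and follows essentially the same route as the paper: stratify $\oo$ by residues modulo $\pp^{k}$ with $k=\lfloor(1-t)/2\rfloor$, kill the cross term via Lemma \ref{thm::padiccirclemethod} (using that $2\in\oo^{\times}$), and identify the surviving sum with $G_{\pi}(u)$ via \eqref{eqn::secondformula} when $t$ is odd. One small slip in your narration: $k$ is chosen so that the \emph{quadratic} term $\sigma\pi^{2k}y^{2}$ lands in $\oo$, while the cross term is what the $y$-integration forces into $\oo$ (thereby constraining $a$), but your subsequent computation handles this correctly.
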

\begin{proof}
If $t\geq0$, then it is clear that $I_{\pi}(\sigma,0)=1$. If $t<0$, we set
$$
k\coloneqq\left\lfloor\frac{1-t}{2}\right\rfloor\geq1,
$$
and then the integral splits as follows,
\begin{align*}
I_{\pi}(\sigma,0)=&\sum_{r\in\oo/\pp^k}\int_{r+\pp^k}e_{\pi}\left(\sigma x^2\right)\mathrm{d}x=\frac{1}{\norm(\pp)^k}\sum_{r\in\oo/\pp^k}\int_{\oo}e_{\pi}\left(\sigma(r+\pi^kx)^2\right)\mathrm{d}x\\
=&~\frac{1}{\norm(\pp)^k}\sum_{r\in\oo/\pp^k}e_{\pi}\left(\sigma r^2\right)\int_{\oo}e_{\pi}\left(2\pi^k\sigma rx\right)\mathrm{d}x=\frac{1}{\norm(\pp)^k}\sum_{r\in\oo/\pp^k}e_{\pi}\left(\sigma r^2\right)\chi_{\oo}(\pi^k\sigma r),
\end{align*}
where in the last equality we apply Lemma \ref{thm::padiccirclemethod}. If $t$ is even, then the summand is non-zero if and only if $r\in\pp^k$. Therefore, we have
$$
I_{\pi}(\sigma,0)=\frac{1}{\norm(\pp)^k}=\norm(\pp)^{\frac{t}{2}}.
$$
If $t$ is odd, then the summand is non-zero if and only if $r\in\pp^{k-1}$. So replacing $r$ in the summation by $\pi^{k-1}r$ and using Lemma \ref{thm::gausssumproperty}, we see that
$$
I_{\pi}(\sigma,0)=\norm(\pp)^{\frac{t}{2}-\frac{1}{2}}\sum_{r\in\oo/\pp}e_{\pi}\left(\frac{ur^2}{\pi}\right)=\epsilon_{\pi}^3\legendre{u}{\pp}\norm(\pp)^{\frac{t}{2}}.
$$
This finishes the proof.
\end{proof}

Next, we prove a formula in general for $I_{\pi}(\sigma,\tau)$.

\begin{lemma}
\label{thm::gaussintegral1pinhomogeneous}
For $\sigma\in K^{\times}$ and $\tau\in K$, we set $t\coloneqq\min(\ord_{\pp}(\sigma),\ord_{\pp}(\tau))$ and we have
$$
I_{\pi}(\sigma,\tau)=
\begin{dcases}
1,&\text{ if }t\geq0,\\
0,&\text{ if }t<0,\ord_{\pp}(\sigma)>\ord_{\pp}(\tau),\\
e_{\pi}(-\tau^2/4\sigma)I_{\pi}(\sigma,0),&\text{ if }t<0,\ord_{\pp}(\sigma)\leq\ord_{\pp}(\tau).
\end{dcases}
$$
\end{lemma}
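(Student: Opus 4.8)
The plan is to dispose of the three cases in the statement one at a time, in each case reducing everything to properties (1)--(3) of $e_{\pi}$ together with the normalization $\int_{\oo}\mathrm{d}x=1$; the one feature of the non-dyadic hypothesis we exploit is that $2\in\oo^{\times}$, so that $\ord_{\pp}(2\sigma)=\ord_{\pp}(\sigma)$ and $\tau/(2\sigma)$ behaves as expected.

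When $t\geq0$ both $\sigma$ and $\tau$ lie in $\oo$, hence $\sigma x^{2}+\tau x\in\oo$ for every $x\in\oo$, and by property (2) the integrand is identically $1$, giving $I_{\pi}(\sigma,\tau)=\int_{\oo}\mathrm{d}x=1$. When $t=\ord_{\pp}(\sigma)\leq\ord_{\pp}(\tau)$, I would complete the square: since $2\in\oo^{\times}$ the element $w\coloneqq\tau/(2\sigma)$ has $\ord_{\pp}(w)=\ord_{\pp}(\tau)-\ord_{\pp}(\sigma)\geq0$, so $w\in\oo$; writing $\sigma x^{2}+\tau x=\sigma(x+w)^{2}-\tau^{2}/(4\sigma)$, pulling the constant out via property (1), and substituting $x\mapsto x-w$ (which preserves $\oo$ because $w\in\oo$, and preserves the Haar measure) yields $I_{\pi}(\sigma,\tau)=e_{\pi}(-\tau^{2}/(4\sigma))I_{\pi}(\sigma,0)$.

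The remaining case, $t<0$ with $\ord_{\pp}(\sigma)>\ord_{\pp}(\tau)$, is the heart of the matter, and I expect the vanishing assertion here to be the main obstacle. Put $s\coloneqq\ord_{\pp}(\tau)=t\leq-1$ (note $\tau\neq0$), $k\coloneqq-s\geq1$, and $m\coloneqq k-1\geq0$, and translate $x\mapsto x+\pi^{m}z$ for a parameter $z\in\oo$ (again preserving $\oo$ and the measure). Expanding $\sigma(x+\pi^{m}z)^{2}+\tau(x+\pi^{m}z)$ and using $\ord_{\pp}(\sigma)\geq s+1$ together with $2\in\oo^{\times}$, one checks $\ord_{\pp}(2\sigma\pi^{m}zx)\geq\ord_{\pp}(\sigma)+m\geq s+k=0$ and $\ord_{\pp}(\sigma\pi^{2m}z^{2})\geq\ord_{\pp}(\sigma)+2m\geq k-1\geq0$, so by property (2) the cross term and the pure $z^{2}$-term contribute a factor $1$; the only surviving extra factor is $e_{\pi}(\tau\pi^{m}z)$. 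Since $\tau\pi^{m}=v\pi^{-1}$ with $v\in\oo^{\times}$, we have $\tau\pi^{m}\notin\oo$, so by property (3) there is $z_{0}\in\oo$ with $e_{\pi}(\tau\pi^{m}z_{0})\neq1$; the translation identity then reads $I_{\pi}(\sigma,\tau)=e_{\pi}(\tau\pi^{m}z_{0})I_{\pi}(\sigma,\tau)$, forcing $I_{\pi}(\sigma,\tau)=0$. The only delicate point is the calibration of the exponent $m$: it must be large enough that the quadratic and cross terms fall into $\oo$, yet small enough that the linear term $\tau\pi^{m}z$ can still leave $\oo$, and $m=-\ord_{\pp}(\tau)-1$ is precisely the value that makes both happen.
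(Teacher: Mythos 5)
Your proposal is correct and follows essentially the same route as the paper: the same completion of the square $x\mapsto x-\tau/2\sigma$ in the third case, and the same translation $x\mapsto x+\pi^{-t-1}z$ in the vanishing case (your $m=-\ord_{\pp}(\tau)-1$ is exactly the paper's exponent). You merely spell out the valuation checks that the paper leaves implicit.
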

\begin{proof}
If $t\geq0$, then it is clear that $I_{\pi}(\sigma,\tau)=1$. If $t<0$ and $\ord_{\pp}(\sigma)>\ord_{\pp}(\tau)$, then replacing $x$ by $x+\pi^{-t-1}y$ with $y\in\oo$, we see that
$$
I_{\pi}(\sigma,\tau)=e_{\pi}(\pi^{-t-1}\tau y)I_{\pi}(\sigma,\tau).
$$
Because $e_{\pi}(\pi^{-t-1}\tau y)\neq1$ for some $y\in\oo$, we see that $I_{\pi}(\sigma,\tau)=0$ in this case. Lastly, if $\ord_{\pp}(\sigma)\leq\ord_{\pp}(\tau)$, then we can apply the change of variable $x\to x-\tau/2\sigma$ to see that
$$
I_{\pi}(\sigma,\tau)=e_{\pi}(-\tau^2/4\sigma)I_{\pi}(\sigma,0),
$$
as desired.
\end{proof}

We also have to evaluate the following integral.

\begin{lemma}
\label{thm::twistlinearintegralp}
For $\sigma=u\pi^t\in K^{\times}$ with $u\in\oo^{\times}$ and $t\in\ZZ$, we have
$$
\int_{\oo^{\times}}\legendre{x}{\pp}e_{\pi}\left(\sigma x\right)\mathrm{d}x=\epsilon_{\pi}^3\legendre{u}{\pp}\norm(\pp)^{-\frac{1}{2}}\chi_{\pi^{-1}\oo^{\times}}(\sigma).
$$
\end{lemma}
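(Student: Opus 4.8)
The plan is to reduce the integral over $\oo^{\times}$ to a Gauss sum over the residue field $\kappa$ and then invoke Lemma \ref{thm::gausssumproperty}. First I would decompose $\oo^{\times}=\bigsqcup_{a\in\kappa^{\times}}(\tilde a+\pp)$, where for each $a\in\kappa^{\times}$ we pick an arbitrary lift $\tilde a\in\oo^{\times}$. Since $\legendre{x}{\pp}$ depends only on the class of $x$ modulo $\pp$, it equals $\legendre{a}{\pp}$ on the coset $\tilde a+\pp$. Writing $x=\tilde a+y$ with $y\in\pp$ and using property (1) of $e_{\pi}$, the case $t=1$ of Lemma \ref{thm::padiccirclemethod} gives $\int_{\pp}e_{\pi}(\sigma y)\mathrm{d}y=\chi_{\pp^{-1}}(\sigma)/\norm(\pp)$, so that
$$
\int_{\oo^{\times}}\legendre{x}{\pp}e_{\pi}(\sigma x)\mathrm{d}x=\frac{\chi_{\pp^{-1}}(\sigma)}{\norm(\pp)}\sum_{a\in\kappa^{\times}}\legendre{a}{\pp}e_{\pi}(\sigma\tilde a).
$$
Here $e_{\pi}(\sigma\tilde a)$ is independent of the choice of lift precisely when $\sigma\in\pp^{-1}$ (so that $\sigma\pp\subseteq\oo$ and property (2) applies), which is exactly when the prefactor $\chi_{\pp^{-1}}(\sigma)$ does not vanish; so the right-hand side is well defined.

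Next I would split into three cases according to $t=\ord_{\pp}(\sigma)$. If $t\geq0$, then $\sigma\in\oo$, hence $e_{\pi}(\sigma\tilde a)=1$ and the inner sum collapses to $\sum_{a\in\kappa^{\times}}\legendre{a}{\pp}=0$, because $\kappa^{\times}$ is cyclic of even order and therefore contains as many squares as non-squares; this agrees with the claimed formula since $\chi_{\pi^{-1}\oo^{\times}}(\sigma)=0$. If $t\leq-2$, then $\chi_{\pp^{-1}}(\sigma)=0$, so the integral vanishes, again matching $\chi_{\pi^{-1}\oo^{\times}}(\sigma)=0$.

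The remaining case $t=-1$, where $\sigma=u\pi^{-1}$ with $u\in\oo^{\times}$, is the one where the Gauss sum genuinely enters: here $e_{\pi}(\sigma\tilde a)=e_{\pi}(ua/\pi)$, so the inner sum is exactly $\sum_{a\in\kappa^{\times}}\legendre{a}{\pp}e_{\pi}(ua/\pi)=G_{\pi}(u)$ by (\ref{eqn::firstformula}). Applying (\ref{eqn::secondformula}) gives $G_{\pi}(u)=\epsilon_{\pi}^{3}\legendre{u}{\pp}\norm(\pp)^{1/2}$, and dividing by $\norm(\pp)$ yields $\epsilon_{\pi}^{3}\legendre{u}{\pp}\norm(\pp)^{-1/2}$, which is the asserted value since $\chi_{\pi^{-1}\oo^{\times}}(\sigma)=1$. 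There is no serious obstacle in this argument; the only point demanding a little care is the bookkeeping around the coset decomposition — namely that the summand $\legendre{a}{\pp}e_{\pi}(\sigma\tilde a)$ is independent of the chosen lifts whenever it contributes — and the recognition of the resulting character sum as the Gauss sum $G_{\pi}(u)$, after which Lemma \ref{thm::gausssumproperty} finishes the computation.
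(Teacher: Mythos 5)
Your proposal is correct and follows essentially the same route as the paper's proof: decompose $\oo^{\times}$ into cosets modulo $\pp$, apply Lemma \ref{thm::padiccirclemethod} to produce the factor $\chi_{\pp^{-1}}(\sigma)/\norm(\pp)$, treat the cases $t\geq0$, $t\leq-2$, and $t=-1$ separately, and recognize the remaining character sum as $G_{\pi}(u)$ via Lemma \ref{thm::gausssumproperty}. The extra remark on independence of the choice of lifts is a harmless refinement of the same argument.
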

\begin{proof}
By Lemma \ref{thm::padiccirclemethod}, we have
$$
\int_{\oo^{\times}}\legendre{x}{\pp}e_{\pi}\left(\sigma x\right)\mathrm{d}x=\sum_{r\in\kappa^{\times}}\legendre{r}{\pp}\int_{r+\pp}e_{\pi}\left(\sigma x\right)\mathrm{d}x=\frac{1}{\norm(\pp)}\sum_{r\in\kappa^{\times}}\legendre{r}{\pp}e_{\pi}\left(\sigma r\right)\chi_{\pp^{-1}}(\sigma).
$$
So the integral vanishes if $t\leq-2$. If $t\geq0$, then we have
$$
\int_{\oo^{\times}}\legendre{x}{\pp}e_{\pi}\left(\sigma x\right)\mathrm{d}x=\frac{1}{\norm(\pp)}\sum_{r\in\kappa^{\times}}\legendre{r}{\pp}=0.
$$
Finally, if $t=-1$, then we can apply Lemma \ref{thm::gausssumproperty} to see that
$$
\int_{\oo^{\times}}\legendre{x}{\pp}e_{\pi}\left(\sigma x\right)\mathrm{d}x=\frac{1}{\norm(\pp)}\sum_{r\in\kappa^{\times}}\legendre{r}{\pp}e_{\pi}\left(\frac{ur}{\pi}\right)=\frac{G_{\pi}(u)}{\norm(\pp)}=\epsilon_{\pi}^3\legendre{u}{\pp}\norm(\pp)^{-\frac{1}{2}},
$$
as desired.
\end{proof}

Now we are ready to state and prove a formula for local densities in non-dyadic cases.

\begin{theorem}
\label{thm::localdensityp}
Suppose that $n\in\oo$ and  $Q(\mathbf{x})=\sum_{1\leq i\leq r}(b_ix_i^2+c_ix_i)$ be an integral quadratic polynomial over $K$. For $1\leq i\leq r$, we set $t_i\coloneqq\min(\ord_{\pp}(b_i),\ord_{\pp}(c_i))$. Then we set
\begin{align*}
D_{\pp}\coloneqq\{1\leq i\leq r~|~\ord_{\pp}(b_i)>\ord_{\pp}(c_i)\},\\
N_{\pp}\coloneqq\{1\leq i\leq r~|~\ord_{\pp}(b_i)\leq\ord_{\pp}(c_i)\}.
\end{align*}
and set $\mathfrak{t}_d\coloneqq\min\{t_i~|~i\in D_{\pp}\}$. For any $t\in\ZZ$, we define
$$
\mathcal{L}_{\pp}(t)\coloneqq\{i\in N_{\pp}~|~t_i-t<0\text{ is odd}\},\quad\ell_{\pp}(t)\coloneqq|\mathcal{L}_{\pp}(t)|.
$$
We further define
$$
\mathfrak{n}\coloneqq n+\sum_{i\in N_{\pp}}\frac{c_i^2}{4b_i}.
$$
If $\mathfrak{n}\neq0$, we assume that $\mathfrak{n}=\mathfrak{u}_n\pi^{\mathfrak{t}_n}$ with $\mathfrak{u}_n\in\oo^{\times}$ and $\mathfrak{t}_n\in\ZZ$. Otherwise, we set $\mathfrak{t}_n\coloneqq\infty$. Then the local density of $Q$ at $\pp$ is given by
$$
\beta_{\pp}(n;Q)=1+\left(1-\frac{1}{\norm(\pp)}\right)\sum_{\substack{1\leq t\leq\min(\mathfrak{t}_d,\mathfrak{t}_n)\\\ell_{\pp}(t)\text{ even}}}\delta_{\pp}(t)\norm(\pp)^{\tau_{\pp}(t)}+\omega_{\pp}\delta_{\pp}(\mathfrak{t}_n+1)\norm(\pp)^{\tau_{\pp}(\mathfrak{t}_n+1)},
$$
where $\tau_{\pp}(t)$, $\delta_{\pp}(t)$, and $\omega_{\pp}$ are given by
$$
\tau_{\pp}(t)\coloneqq t+\sum_{\substack{i\in N_{\pp}\\t_i<t}}\frac{t_i-t}{2},\quad
\delta_{\pp}(t)\coloneqq\epsilon_{\pi}^{3\ell_{\pp}(t)}\prod_{i\in\mathcal{L}_{\pp}(t)}\legendre{u_i}{\pp}.
$$
with $u_i\coloneqq\pi^{-\ord_{\pp}(b_i)}b_i\in\oo^{\times}$ and
$$
\omega_{\pp}\coloneqq
\begin{dcases}
0,&\text{ if }\mathfrak{t}_n\geq\mathfrak{t}_d,\\
-\frac{1}{\norm(\pp)},&\text{ if }\mathfrak{t}_n<\mathfrak{t}_d,\ell_{\pp}(\mathfrak{t}_n+1)\text{ is even},\\
\epsilon_{\pi}\legendre{\mathfrak{u}_n}{\pp}\norm(\pp)^{-\frac{1}{2}},&\text{ if }\mathfrak{t}_n<\mathfrak{t}_d,\ell_{\pp}(\mathfrak{t}_n+1)\text{ is odd}.
\end{dcases}
$$
\end{theorem}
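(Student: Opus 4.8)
The plan is to evaluate the reduced integral
$$
\beta_{\pp}(n;Q)=\int_{K}e_{\pi}(-\sigma n)\prod_{i=1}^{r}I_{\pi}(\sigma b_i,\sigma c_i)\,\mathrm{d}\sigma
$$
by decomposing $K$ into $\oo$ together with the annuli $\pi^{-t}\oo^{\times}$, $t\geq1$. On $\oo$ every order $\ord_{\pp}(\sigma b_i)$ and $\ord_{\pp}(\sigma c_i)$ is nonnegative, so Lemma~\ref{thm::gaussintegral1pinhomogeneous} gives $I_{\pi}(\sigma b_i,\sigma c_i)=1$ and $e_{\pi}(-\sigma n)=1$; hence $\int_{\oo}(\cdots)\,\mathrm{d}\sigma=1$, which is the leading term. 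The whole content is then the contribution of each annulus.

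Fixing $t\geq1$ and writing $\sigma=\pi^{-t}\rho$ with $\rho\in\oo^{\times}$, I would run the trichotomy of Lemma~\ref{thm::gaussintegral1pinhomogeneous} factor by factor. For $i\in D_{\pp}$ one has $\ord_{\pp}(\sigma b_i)>\ord_{\pp}(\sigma c_i)$, so the factor is $1$ if $t\leq t_i$ and $0$ if $t>t_i$; thus the product vanishes unless $1\leq t\leq\mathfrak{t}_d$, and for such $t$ the $D_{\pp}$-factors are all $1$. For $i\in N_{\pp}$ the factor is $1$ if $t\leq t_i$, and when $t_i<t$ it equals $e_{\pi}(-\sigma c_i^{2}/(4b_i))\,I_{\pi}(\sigma b_i,0)$, into which I would substitute Lemma~\ref{thm::gaussintegral1p}: since $\sigma b_i=\rho u_i\pi^{t_i-t}$ with $u_i\in\oo^{\times}$ and $t_i-t<0$, this is $\norm(\pp)^{(t_i-t)/2}$ for $t_i-t$ even and $\epsilon_{\pi}^{3}\legendre{\rho u_i}{\pp}\norm(\pp)^{(t_i-t)/2}$ for $t_i-t$ odd. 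Multiplying over $i$ and using multiplicativity of $\legendre{\cdot}{\pp}$, the scalar part collapses exactly to $\delta_{\pp}(t)\,\norm(\pp)^{\tau_{\pp}(t)-t}\legendre{\rho}{\pp}^{\ell_{\pp}(t)}$. For the phases, since $K$ is non-dyadic we have $\ord_{\pp}(2)=0$, so for $i\in N_{\pp}$ with $t_i\geq t$ one has $\ord_{\pp}(\sigma c_i^{2}/(4b_i))\geq t_i-t\geq0$, i.e.\ those factors $e_{\pi}(-\sigma c_i^{2}/(4b_i))$ are trivial and can be inserted for free; additivity of $e_{\pi}$ then turns $e_{\pi}(-\sigma n)\prod_{i\in N_{\pp}}e_{\pi}(-\sigma c_i^{2}/(4b_i))$ into $e_{\pi}(-\sigma\mathfrak{n})$. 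So the $t$-th annulus contributes
$$
\delta_{\pp}(t)\,\norm(\pp)^{\tau_{\pp}(t)-t}\int_{\pi^{-t}\oo^{\times}}e_{\pi}(-\sigma\mathfrak{n})\legendre{\sigma\pi^{t}}{\pp}^{\ell_{\pp}(t)}\,\mathrm{d}\sigma.
$$

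To finish I would evaluate the remaining integral, which after $\sigma=\pi^{-t}\rho$ is $\norm(\pp)^{t}\int_{\oo^{\times}}\legendre{\rho}{\pp}^{\ell_{\pp}(t)}e_{\pi}(-\pi^{-t}\rho\mathfrak{n})\,\mathrm{d}\rho$, in two cases. If $\ell_{\pp}(t)$ is even the twist is trivial on $\oo^{\times}$, and writing $\int_{\oo^{\times}}=\int_{\oo}-\int_{\pp}$ and using Lemma~\ref{thm::padiccirclemethod} (with $t=0,1$) it equals $\norm(\pp)^{t}(1-\norm(\pp)^{-1})$ for $t\leq\mathfrak{t}_n$, equals $-\norm(\pp)^{t-1}$ for $t=\mathfrak{t}_n+1$, and is $0$ for $t\geq\mathfrak{t}_n+2$. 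If $\ell_{\pp}(t)$ is odd the twist is $\legendre{\rho}{\pp}$ and Lemma~\ref{thm::twistlinearintegralp} makes it vanish unless $\ord_{\pp}(\pi^{-t}\mathfrak{n})=-1$, i.e.\ $t=\mathfrak{t}_n+1$, where it equals $\epsilon_{\pi}^{3}\legendre{-\mathfrak{u}_n}{\pp}\norm(\pp)^{t-1/2}$. Assembling: the even-$\ell_{\pp}(t)$ annuli with $1\leq t\leq\min(\mathfrak{t}_d,\mathfrak{t}_n)$ give the main sum $(1-\norm(\pp)^{-1})\sum\delta_{\pp}(t)\norm(\pp)^{\tau_{\pp}(t)}$; the odd-$\ell_{\pp}(t)$ annuli in that range give nothing; and in the leftover range $\min(\mathfrak{t}_d,\mathfrak{t}_n)<t\leq\mathfrak{t}_d$ only $t=\mathfrak{t}_n+1$ can survive, and only when $\mathfrak{t}_n<\mathfrak{t}_d$, producing exactly the $\omega_{\pp}$-term --- in the odd case one needs $\epsilon_{\pi}^{3}\legendre{-\mathfrak{u}_n}{\pp}=\epsilon_{\pi}\legendre{\mathfrak{u}_n}{\pp}$, which follows from $\epsilon_{\pi}^{2}=\legendre{-1}{\pp}$, itself read off from $G_{\pi}(1)^{2}=\legendre{-1}{\pp}\norm(\pp)$ in the proof of Lemma~\ref{thm::gausssumproperty} together with $\epsilon_{\pi}^{4}=1$.

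I expect the one part needing genuine care, beyond bookkeeping, to be the coordination of the three truncation mechanisms --- vanishing of the $D_{\pp}$-factors past $\mathfrak{t}_d$, vanishing of the unit integral past $t=\mathfrak{t}_n+1$, and the parity of $\ell_{\pp}(t)$ --- so that precisely the listed terms survive, together with the last-annulus evaluation via Lemma~\ref{thm::twistlinearintegralp} and the sign identity above. I would also note that when $\mathfrak{t}_d=\mathfrak{t}_n=\infty$ (that is, $D_{\pp}=\varnothing$ and $\mathfrak{n}=0$) the sum over $t$ is genuinely infinite and converges because $\norm(\pp)^{\tau_{\pp}(t)}$ decays like $\norm(\pp)^{t(1-r/2)}$, so in that regime the identity is to be read under the standing assumption that the defining integral converges; in all other cases the sum is finite.
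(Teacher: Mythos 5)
Your proposal is correct and follows essentially the same route as the paper's proof: the decomposition of $K$ into $\oo$ and the annuli $\pi^{-t}\oo^{\times}$, the factor-by-factor application of Lemmas \ref{thm::gaussintegral1p} and \ref{thm::gaussintegral1pinhomogeneous}, the replacement of $n+\sum_{t_i<t}c_i^2/(4b_i)$ by $\mathfrak{n}$, and the final evaluation of the unit integral via Lemma \ref{thm::padiccirclemethod} in the even case and Lemma \ref{thm::twistlinearintegralp} in the odd case. You even make explicit the sign identity $\epsilon_{\pi}^3\legendre{-\mathfrak{u}_n}{\pp}=\epsilon_{\pi}\legendre{\mathfrak{u}_n}{\pp}$ (from $\epsilon_{\pi}^2=\legendre{-1}{\pp}$) that the paper uses silently.
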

\begin{proof}
By Lemma \ref{thm::gaussintegral1p} and Lemma \ref{thm::gaussintegral1pinhomogeneous}, we can evaluate the local density as follows,
\begin{align*}
\beta_{\pp}(n;Q)=&~1+\sum_{t\geq1}\int_{\pi^{-t}\oo^{\times}}e_{\pi}\left(-\sigma n\right)\prod_{1\leq i\leq r}I_{\pi}(\sigma b_i,\sigma c_i)\mathrm{d}\sigma\\
=&~1+\sum_{1\leq t\leq\mathfrak{t}_d}\delta_{\pp}(t)\norm(\pp)^{\tau_{\pp}(t)}\int_{\oo^{\times}}\legendre{\sigma}{\pp}^{\ell_{\pp}(t)}e_{\pi}\bigg(-\frac{\sigma}{\pi^t}\bigg(n+\sum_{\substack{i\in N_{\pp}\\t_i<t}}\frac{c_i^2}{4b_i}\bigg)\bigg)\mathrm{d}\sigma\\
=&~1+\sum_{1\leq t\leq\mathfrak{t}_d}\delta_{\pp}(t)\norm(\pp)^{\tau_{\pp}(t)}\int_{\oo^{\times}}\legendre{\sigma}{\pp}^{\ell_{\pp}(t)}e_{\pi}\left(-\frac{\sigma\mathfrak{n}}{\pi^t}\right)\mathrm{d}\sigma,
\end{align*}
where in the last equality, we make use of the following fact:
$$
n+\sum_{\substack{i\in N_{\pp}\\t_i<t}}\frac{c_i^2}{4b_i}=\mathfrak{n}-\sum_{\substack{i\in N_{\pp}\\t_i\geq t}}\frac{c_i^2}{4b_i}\equiv\mathfrak{n}\pmod{\pp^t}.
$$
If $\ell_{\pp}(t)$ is even, applying Lemma \ref{thm::padiccirclemethod}, we have
$$
\int_{\oo^{\times}}e_{\pi}\left(\frac{-\sigma\mathfrak{n}}{\pi^t}\right)\mathrm{d}\sigma=\left(\int_{\oo}-\int_{\pp}\right)e_{\pi}\left(\frac{-\sigma \mathfrak{n}}{\pi^t}\right)\mathrm{d}\sigma=\chi_{\pp^t}(\mathfrak{n})-\frac{1}{\norm(\pp)}\chi_{\pp^{t-1}}(\mathfrak{n}).
$$
If $\ell_{\pp}(t)$ is odd, applying Lemma \ref{thm::twistlinearintegralp} instead, we have
$$
\int_{\oo^{\times}}\legendre{\sigma}{\pp}e_{\pi}\left(\frac{-\sigma\mathfrak{n}}{\pi^t}\right)\mathrm{d}\sigma=\epsilon_{\pi}\legendre{\mathfrak{u}_n}{\pp}\norm(\pp)^{-\frac{1}{2}}\chi_{\pi^{t-1}\oo^{\times}}(\mathfrak{n}).
$$
Combining these two cases, we conclude that
$$
\beta_{\pp}(n;Q)=1+\left(1-\frac{1}{\norm(\pp)}\right)\sum_{\substack{1\leq t\leq\min(\mathfrak{t}_d,\mathfrak{t}_n)\\\ell_{\pp}(t)\text{ even}}}\delta_{\pp}(t)\norm(\pp)^{\tau_{\pp}(t)}+\omega_{\pp}\delta_{\pp}(\mathfrak{t}_n+1)\norm(\pp)^{\tau_{\pp}(\mathfrak{t}_n+1)},
$$
as desired.
\end{proof}
\begin{remark}
If $Q$ is a quadratic form, then $c_i=0$ for $1\leq i\leq r$ and thus $\mathfrak{n}=n$. Moreover, we have $D_{\pp}=\varnothing$. So it is understood that $\mathfrak{t}_d\coloneqq\infty$. Then we can simplify $\omega_{\pp}$ as follows,
$$
\omega_{\pp}=
\begin{dcases}
-\frac{1}{\norm(\pp)},&\text{ if }\ell_{\pp}(\mathfrak{t}_n+1)\text{ is even},\\
\epsilon_{\pi}\legendre{\mathfrak{u}_n}{\pp}\norm(\pp)^{-\frac{1}{2}},&\text{ if }\ell_{\pp}(\mathfrak{t}_n+1)\text{ is odd}.
\end{dcases}
$$
\end{remark}

\section{Unramified dyadic cases}
\label{sec::unramifieddyadic}

In this section, we give an explicit formula for the local densities when $K$ is dyadic and unramified. By \cite[32:9 and 32:10]{o2013introduction}, the unique unramified extension over $\QQ_2$ of degree $f$ is isomorphic to $\QQ_2(\zeta)$, where $\zeta$ is a primitive $(2^f-1)$-th root of unity. For simplicity, we take $K=\QQ_2(\zeta)$. Then we denote by $U$ the subgroup of $K^{\times}$ generated by $\zeta$. For $u\in U$, we denote by $\sqrt{u}$ the unique square root of $u$. By \cite[32:8a]{o2013introduction}, the set $U_0\coloneqq U\cup\{0\}$ is a complete set of representatives of $\kappa$, and thus every element in $\oo$ can be written as a power series in $\pi$ with coefficients in $U_0$, where $\pi$ is a uniformizer of $K$. For simplicity, we take $\pi=2$. By \cite[(2.12) of Chapter 3]{neukirch2013algebraic}, the different ideal is $\oo$. Finally, we note that
\begin{equation}
\label{eqn::tracesquare}
\trace_{K/\QQ_2}(x^2)=\trace_{K/\QQ_2}(x),
\end{equation}
for $x\in U_0$.

\begin{lemma}
\label{thm::dyadicinhomogeneous}
For $\sigma\in\oo^{\times}$ and $\tau\in\oo$, we have
$$
\sum_{r\in\oo/\pp}e_{\pi}\left(\frac{\sigma r^2+\tau r}{2}\right)=\chi_{\pp}(\sigma-\tau^2)\norm(\pp).
$$
\end{lemma}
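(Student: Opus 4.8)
The plan is to complete the square in the residue field; in residue characteristic $2$ this collapses the quadratic exponential sum into a linear one, which Lemma~\ref{thm::sumcirclemethod} then evaluates.

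First I would reduce to the case where $\sigma$ and $\tau$ are Teichm\"uller representatives. Since $\sigma\in\oo^{\times}$ there is a unique $\sigma_0\in U$ with $\sigma\equiv\sigma_0\pmod{\pp}$, and a unique $\tau_0\in U_0$ with $\tau\equiv\tau_0\pmod{\pp}$. For $r\in U_0$, writing $\sigma-\sigma_0=2\eta_1$ and $\tau-\tau_0=2\eta_2$ with $\eta_1,\eta_2\in\oo$, the element $\eta_1 r^2+\eta_2 r$ lies in $\oo$, so by properties (1) and (2) of $e_{\pi}$ the summand equals $e_{\pi}\!\left((\sigma_0 r^2+\tau_0 r)/2\right)$. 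Since also $\sigma-\tau^2\equiv\sigma_0-\tau_0^2\pmod{\pp}$, the asserted right-hand side is unchanged, and we may assume $\sigma=\sigma_0\in U$ and $\tau=\tau_0\in U_0$.

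The key step is then to linearize $\sigma r^2$. Because $U$ is cyclic of odd order $2^f-1$, squaring is a bijection of $U$, so $\sqrt{\sigma}\in U$ is defined and $\sqrt{\sigma}\,r\in U_0$ satisfies $(\sqrt{\sigma}\,r)^2=\sigma r^2$ for every $r\in U_0$. Applying \eqref{eqn::tracesquare} to $\sqrt{\sigma}\,r\in U_0$ gives $\trace_{K/\QQ_2}(\sigma r^2)=\trace_{K/\QQ_2}(\sqrt{\sigma}\,r)$, hence, since the different is $\oo$ (so that $e_{\pi}(x)=e_2(\trace_{K/\QQ_2}(x))$) and the trace is $\QQ_2$-linear, $e_{\pi}(\sigma r^2/2)=e_{\pi}(\sqrt{\sigma}\,r/2)$. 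Multiplying by $e_{\pi}(\tau r/2)$ and summing, and noting that the summand depends only on $r$ modulo $\pp$ so that $U_0$ may be replaced by $\oo/\pp$, we obtain
$$
\sum_{r\in\oo/\pp}e_{\pi}\!\left(\frac{\sigma r^2+\tau r}{2}\right)=\sum_{r\in\oo/\pp}e_{\pi}\!\left(\frac{(\sqrt{\sigma}+\tau)r}{2}\right).
$$
Since $\pi=2$, Lemma~\ref{thm::sumcirclemethod} (applied with $\sqrt{\sigma}+\tau\in\oo$ in place of $\sigma$) evaluates the right-hand side as $\norm(\pp)\,\chi_{\pp}(\sqrt{\sigma}+\tau)$.

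Finally I would match the two characteristic functions: since $2\in\pp$, we have $\sqrt{\sigma}+\tau\in\pp$ if and only if $\sqrt{\sigma}\equiv\tau\pmod{\pp}$, and, squaring (a bijection of $\kappa$), this holds if and only if $\sigma=(\sqrt{\sigma})^2\equiv\tau^2\pmod{\pp}$, i.e. $\sigma-\tau^2\in\pp$; this yields the claimed value. The main obstacle is the linearization step: it is valid precisely because $\sqrt{\sigma}\,r$ lands in $U_0$, so \eqref{eqn::tracesquare} applies verbatim, and because that identity is an exact equality of $p$-adic traces, which survives division by $2$. Everything else is formal.
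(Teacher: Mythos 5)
Your proof is correct, but it is genuinely different from what the paper does: the paper offers no argument at all for this lemma and simply cites Corollary~2 of Gurak's work on Gauss sums over unramified dyadic fields. Your argument is self-contained and uses only tools already set up in the paper: the reduction to Teichm\"uller representatives is valid because $(\sigma r^2+\tau r)/2$ changes by an element of $\oo$ when $\sigma,\tau,r$ are perturbed by elements of $\pp=2\oo$; the linearization $e_{\pi}(\sigma r^2/2)=e_{\pi}(\sqrt{\sigma}\,r/2)$ is the crucial step, and you correctly justify it by observing that \eqref{eqn::tracesquare} is an exact identity of traces in $\ZZ_2$ (since $\sqrt{\sigma}\,r\in U_0$ when $\sigma\in U$, $r\in U_0$), so it survives division by $2$ inside $e_2\circ\trace_{K/\QQ_2}$; and the final identification $\chi_{\pp}(\sqrt{\sigma}+\tau)=\chi_{\pp}(\sigma-\tau^2)$ follows because $-1\equiv 1$ and squaring is the Frobenius bijection on $\kappa$. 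What your route buys is a proof of the lemma entirely within the paper's own framework (Lemma~\ref{thm::sumcirclemethod} plus \eqref{eqn::tracesquare}), making Section~\ref{sec::unramifieddyadic} less dependent on the external reference; what the citation buys the authors is brevity and consistency with their later reliance on Gurak's Lemma~7 for the deeper Gauss-sum evaluation in Lemma~\ref{thm::dyadicgausssum}, which your elementary method would not reproduce as easily.
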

\begin{proof}
See \cite[Corollary 2]{gurak2010gauss}.
\end{proof}

\begin{lemma}
\label{thm::dyadicgausssum}
For any $\sigma\equiv a+2b\pmod{\pp^2}$ with $a\in U$ and $b\in U_0$, we have
$$
\sum_{r\in U_0}e_{\pi}\left(\frac{\sigma r}{4}\right)=\sum_{r\in U_0}e_{\pi}\left(\frac{\sigma r^2}{4}\right)=\sum_{r\in\oo/\pp}e_{\pi}\left(\frac{\sigma r^2}{4}\right)=e_{\pi}\left(\frac{a-2b}{8a}\right)(-1)^{f-1}\norm(\pp)^{\frac{1}{2}}.
$$
\end{lemma}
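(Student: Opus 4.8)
The plan is to reduce the three displayed sums to a single one, recognise that one as a standard quadratic Gauss sum over $\oo$, and then evaluate it by the dyadic Gauss–sum method of \cite[Section~5]{gurak2010gauss}.

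\emph{Step 1: the three sums agree.} Since $|U| = 2^f-1$ is odd, the squaring map $r\mapsto r^2$ is a bijection of the cyclic group $U$, hence of $U_0$; re-indexing the first sum by $r\mapsto r^2$ gives $\sum_{r\in U_0}e_\pi(\sigma r/4)=\sum_{r\in U_0}e_\pi(\sigma r^2/4)$. Next, $U_0$ is a complete set of representatives of $\kappa=\oo/\pp$, and the function $r\mapsto e_\pi(\sigma r^2/4)$ descends to $\oo/\pp$: if $r'=r+2s$ with $s\in\oo$ then $\sigma r'^2/4-\sigma r^2/4=\sigma rs+\sigma s^2\in\oo$, so $e_\pi$ of the difference is $1$; hence the second and third sums coincide. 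As all three sums are also unchanged when $\sigma$ is altered by $\pp^2$, we may take $\sigma=a+2b$ with $a\in U$, $b\in U_0$, and it remains to evaluate $S\coloneqq\sum_{r\in U_0}e_\pi(\sigma r^2/4)$.

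\emph{Step 2: rewrite $S$ as a twisted Gauss sum.} Writing $\sigma r^2/4=ar^2/4+br^2/2$, using multiplicativity of $e_\pi$, and then applying the substitutions $r\mapsto r^2$ and $r\mapsto a^{-1}r$ (both bijections of $U_0$, the latter because $a\in U$), one obtains
$$S=\sum_{r\in U_0}e_\pi\!\left(\frac r4\right)e_\pi\!\left(\frac{cr}{2}\right),\qquad c\coloneqq b/a\in U_0,$$
where $e_\pi(cr/2)=(-1)^{\trace_{\kappa/\FF_2}(\overline{cr})}\in\{\pm1\}$. Equivalently, $S=\norm(\pp)^{-1}\sum_{r\bmod\pp^2}e_\pi(\sigma r^2/4)$ is, up to the factor $\norm(\pp)^{-1}$, a quadratic Gauss sum of level $\pp^2$ over $\oo$ with unit leading coefficient $\sigma$; this is the unramified-dyadic analogue of the Gauss sums of Section~\ref{sec::nondyadic}, with Lemma~\ref{thm::dyadicinhomogeneous} as its level-$\pp$ counterpart.

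\emph{Step 3: evaluate $S$.} I would now carry out Gurak's dyadic computation \cite[Section~5]{gurak2010gauss}. Expanding each Teichmüller representative $r\in U_0$ and using the arithmetic of sums of Teichmüller lifts (via Witt vectors) together with \eqref{eqn::tracesquare}, one first reduces the phase $e_\pi(r/4)$ to the residue-field data $\trace_{\kappa/\FF_2}(\bar r)$ and the second power sum $\sum_{0\le i<j\le f-1}\bar r^{2^i}\bar r^{2^j}$; completing the square then peels off the constant $e_\pi\!\left(\frac{a-2b}{8a}\right)$, and what is left is a quadratic exponential sum over $\FF_{2^f}$, twisted by the linear character $\bar r\mapsto(-1)^{\trace_{\kappa/\FF_2}(\bar c\bar r)}$. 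Evaluating that finite-field sum — splitting off the hyperplane $\trace_{\kappa/\FF_2}(\bar r)=0$ and using the standard values of quadratic Gauss sums over $\FF_2$-spaces — produces modulus $\norm(\pp)^{1/2}=2^{f/2}$ and sign $(-1)^{f-1}$, which is the source of the parity dependence, and assembles to the claimed formula. The main obstacle is precisely this last evaluation: carrying the eighth-root-of-unity phase $e_\pi\!\left(\frac{a-2b}{8a}\right)$ faithfully through the Teichmüller bookkeeping and correctly pinning down the sign $(-1)^{f-1}$; the tidy non-dyadic cancellations of Section~\ref{sec::nondyadic} are unavailable here, which is what forces one into Gurak's more delicate analysis. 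As a sanity check, for $K=\QQ_2$ (so $f=1$, $U_0=\{0,1\}$) the right-hand side reduces to $e_\pi((a-2b)/8a)\,\norm(\pp)^{1/2}$, which one verifies directly and which recovers the classical dyadic Gauss-sum value underlying \cite{yang1998explicit}.
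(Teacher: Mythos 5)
Your Step 1 is correct and is exactly what the paper does: the first equality via the squaring bijection on $U_0$ (using that $|U|=2^f-1$ is odd), the second because $r\mapsto e_{\pi}(\sigma r^2/4)$ is well defined on $\oo/\pp$. The entire remaining content of the lemma --- the evaluation of $\sum_{r\in\oo/\pp}e_{\pi}(\sigma r^2/4)$ as $e_{\pi}\left(\frac{a-2b}{8a}\right)(-1)^{f-1}\norm(\pp)^{1/2}$ --- is handled in the paper by a direct citation of \cite[Lemma 7]{gurak2010gauss}, which is precisely this statement. Your Steps 2--3 instead propose to re-derive that result by redoing Gurak's Teichm\"uller/Witt-vector computation, and as written this is only a plan, not a proof: you yourself flag that tracking the phase $e_{\pi}\left(\frac{a-2b}{8a}\right)$ and the sign $(-1)^{f-1}$ through the finite-field reduction is the real obstacle, and you do not resolve it. So the approach is the same as the paper's, but to close the argument you should either cite Gurak's Lemma 7 for the third equality outright (as the paper does) or actually carry out the computation you sketch; your $f=1$ sanity check is consistent with the formula but does not substitute for the general case.
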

\begin{proof}
Using the fact that $r\mapsto r^2$ gives rise to a bijection from $U_0$ to $U_0$, we obtain the first equality. The second equality is obvious. The third equality follows from \cite[Lemma 7]{gurak2010gauss}.
\end{proof}

\begin{lemma}
\label{thm::teichmullersum}
For $a,b\in U$ such that $a+b\equiv u_0+2u_1\pmod{\pp}$ with $u_0,u_1\in U_0$, we have $u_0\equiv a+b+2\sqrt{ab}\pmod{\pp^2}$ and $u_1=\sqrt{ab}$. Furthermore, we have
$$
e_{\pi}\left(\frac{u_0}{4}\right)=e_{\pi}\left(\frac{a+b+2ab}{4}\right).
$$
For any $a,b,c\in U$ such that $a+b+c\equiv u_0\pmod{\pp}$ with $u_0\in U_0$, we have 
$$
e_{\pi}\left(\frac{u_0}{4}\right)=e_{\pi}\left(\frac{a+b+c+2ab+2ac+2bc}{4}\right).
$$
\end{lemma}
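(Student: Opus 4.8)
The plan is to reduce every assertion to a single algebraic fact — that $a+b+2\sqrt{ab}$ is a perfect square in $\oo$, namely $(\sqrt a+\sqrt b)^2$ — together with the trace identity (\ref{eqn::tracesquare}). I will repeatedly use the following elementary points: for $u,v\in U$ one has $\sqrt u\,\sqrt v=\sqrt{uv}$, since squaring is an automorphism of the odd-order group $U$; reduction modulo $\pp$ sends $\sqrt u$ to $\sqrt{\bar u}$ in $\kappa$; and the square-root map on $\kappa$ is \emph{additive}, because $\kappa$ has characteristic $2$. It is convenient to set $\sqrt0\coloneqq0$ so that all of this persists on $U_0$.

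For the two-variable congruences, I would set $z\coloneqq\sqrt a+\sqrt b\in\oo$, so that $z^2=a+b+2\sqrt{ab}$. Writing $z\equiv z_0\pmod\pp$ with $z_0\in U_0$, we have $z=z_0+2t$ for some $t\in\oo$, so (using $\pi=2$) the cross terms lie in $4\oo=\pp^2$ and $z^2\equiv z_0^2\pmod{\pp^2}$. Since $z_0^2\in U_0$ and $z_0^2\equiv a+b\pmod\pp$, uniqueness of the $U_0$-representative forces $u_0=z_0^2$; hence $u_0\equiv a+b+2\sqrt{ab}\pmod{\pp^2}$. Subtracting this from the hypothesis $a+b\equiv u_0+2u_1\pmod{\pp^2}$ (which pins down the first two $\pi$-adic digits of $a+b$) gives $2u_1\equiv2\sqrt{ab}\pmod{\pp^2}$, so $u_1\equiv\sqrt{ab}\pmod\pp$, and then $u_1=\sqrt{ab}$ as both lie in $U_0$.

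For the exponential identities I would use that the different is $\oo$ and $\pi=2$, so $e_{\pi}(x)=e_2(\trace_{K/\QQ_2}(x))$, together with properties (1)--(2) of $e_{\pi}$. From $u_0\equiv a+b+2\sqrt{ab}\pmod{\pp^2}$ and the fact that $e_{\pi}$ kills $\oo$, we get $e_{\pi}(u_0/4)=e_{\pi}((a+b+2\sqrt{ab})/4)$, so the two-variable claim reduces to $e_{\pi}\big((\sqrt{ab}-ab)/2\big)=1$; writing $y=\sqrt{ab}\in U_0$, this equals $e_2\big(\tfrac12(\trace(y)-\trace(y^2))\big)=1$ by (\ref{eqn::tracesquare}). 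For the three-variable statement I would let $v_0\in U_0$ be the representative of $\overline{a+b}$, apply the (extended) two-variable congruence to $(a,b)$ and then to $(v_0,c)$ — noting that the $U_0$-representative of $\overline{v_0+c}=\overline{a+b+c}$ is $u_0$ — and use additivity of the square root in $\kappa$ to replace $\sqrt{v_0c}$ by $\sqrt{ac}+\sqrt{bc}$ modulo $\pp$. This yields $u_0\equiv a+b+c+2\sqrt{ab}+2\sqrt{ac}+2\sqrt{bc}\pmod{\pp^2}$, and as before each replacement of $\sqrt{xy}$ by $xy$ only introduces a factor $e_{\pi}((w-w^2)/2)$ with $w=\sqrt{xy}\in U_0$, which equals $1$ by (\ref{eqn::tracesquare}) exactly as above.

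I do not expect a genuine obstacle; the one delicate point is obtaining the congruences modulo $\pp^2$ rather than merely $\pp$, and exhibiting $a+b+2\sqrt{ab}$ as a square — whose residue already determines it modulo $\pp^2$ — is precisely what disposes of this. One should also verify that the degenerate case $v_0=0$ (equivalently $\bar a=\bar b$) is handled uniformly, which it is once $\sqrt0=0$ is adopted: then $\sqrt{v_0c}=0$ and $\sqrt{ac}+\sqrt{bc}$ likewise vanishes modulo $\pp$, so the displayed congruence and the resulting exponential identity hold in that case too.
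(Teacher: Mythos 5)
Your proof is correct and follows essentially the same route as the paper's: both exhibit $a+b+2\sqrt{ab}$ (resp.\ the three-variable analogue) as the square $(\sqrt{a}+\sqrt{b})^2$, hence congruent modulo $\pp^2$ to the square of its $U_0$-residue, and then convert each $\sqrt{xy}$ to $xy$ inside the exponential via the trace identity (\ref{eqn::tracesquare}). The only cosmetic difference is that you derive the three-variable congruence by iterating the two-variable one together with additivity of square roots in $\kappa$, whereas the paper directly expands $(\sqrt{a}+\sqrt{b}+\sqrt{c})^2$; you are also somewhat more explicit than the paper's terse write-up about the mod-$\pp^2$ bookkeeping and the degenerate case.
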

\begin{proof}
Since $(\sqrt{a}-\sqrt{b})^2\in U_0+\pp^2$, we have
$$
a+b\equiv u_0+2\sqrt{ab}\pmod{\pp^2}.
$$
It follows that $u_1=\sqrt{ab}$ and $u_0\equiv a+b+2\sqrt{ab}\pmod{\pp^2}$. Using (\ref{eqn::tracesquare}), we see that
$$
e_{\pi}\left(\frac{u_0}{4}\right)=e_{\pi}\left(\frac{a+b+2\sqrt{ab}}{4}\right)=e_{\pi}\left(\frac{a+b+2ab}{4}\right).
$$
Similarly, we have $(\sqrt{a}+\sqrt{b}+\sqrt{c})^2=u_0\pmod{\pp^2}$. Using (\ref{eqn::tracesquare}) again, we see that
$$
e_{\pi}\left(\frac{u_0}{4}\right)=e_{\pi}\left(\frac{a+b+c+2ab+2ac+2bc}{4}\right),
$$
as desired.
\end{proof}

We define a map $\eta\colon\oo^{\times}\to\{\pm1\}$ by
\begin{equation}
\label{eqn::defineeta}
\eta(u)\coloneqq e_{\pi}\left(\frac{c-b}{2a}\right),
\end{equation}
for $u\equiv a+2b+4c\pmod{\pp^3}$ with $a\in U$ and $b,c\in U_0$.

\begin{lemma}
\label{thm::character}
The map $\eta$ is a quadratic character.
\end{lemma}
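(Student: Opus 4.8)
The plan is to establish three things in turn: that $\eta$ is a well-defined function $\oo^{\times}\to\{\pm1\}$, that its multiplicativity reduces to the group of principal units $1+\pp$, and that multiplicativity on $1+\pp$ follows from a digit computation together with the trace identity (\ref{eqn::tracesquare}); nontriviality is then an easy by-product. For well-definedness, recall that every $u\in\oo^{\times}$ has a unique expansion $u=\sum_{i\geq0}d_i2^i$ with $d_i\in U_0$ and $d_0\in U$, so the triple $(a,b,c)=(d_0,d_1,d_2)$ appearing in (\ref{eqn::defineeta}) is uniquely determined by $u\bmod\pp^3$; moreover $(c-b)/a\in\oo$ gives $\eta(u)^2=e_{\pi}((c-b)/a)=1$, so indeed $\eta(u)\in\{\pm1\}$, and $\eta$ descends to $\oo^{\times}/(1+\pp^3)$.

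Next I would reduce to principal units. Writing a unit as $u\equiv a(1+2\beta+4\gamma)\pmod{\pp^3}$ with $a\in U$ and $\beta,\gamma\in U_0$, the canonical digits of $u$ are $a$, $a\beta$, $a\gamma$, so $\eta(u)=e_{\pi}((\gamma-\beta)/2)$; in particular $\eta(u)$ depends only on the image of $u$ under the projection $\oo^{\times}=U\times(1+\pp)\to1+\pp$, and $\eta|_{U}\equiv1$ (take $\beta=\gamma=0$). Since that projection is a group homomorphism, it suffices to prove that $\eta$ restricted to $1+\pp$ is multiplicative.

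For $u\equiv1+2\beta_1+4\gamma_1$ and $v\equiv1+2\beta_2+4\gamma_2\pmod{\pp^3}$ I would expand $uv\equiv1+2(\beta_1+\beta_2)+4(\gamma_1+\gamma_2+\beta_1\beta_2)\pmod{\pp^3}$ and carry the second digit: writing $\beta_1+\beta_2\equiv w_0+2w_1\pmod{\pp^2}$ with $w_0,w_1\in U_0$ — via Lemma~\ref{thm::teichmullersum} when both $\beta_i\in U$, and trivially otherwise — puts $uv$ in canonical form $uv\equiv1+2w_0+4C\pmod{\pp^3}$ with $C\equiv w_1+\gamma_1+\gamma_2+\beta_1\beta_2\pmod{\pp}$. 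Since $\eta(uv)=e_{\pi}((C-w_0)/2)$ while $\eta(u)\eta(v)=e_{\pi}((\gamma_1+\gamma_2-\beta_1-\beta_2)/2)$, the identity $\eta(uv)=\eta(u)\eta(v)$ becomes $e_{\pi}(z/2)=1$ for $z=C-w_0-\gamma_1-\gamma_2+\beta_1+\beta_2\in\oo$. Because the different is $\oo$, we have $e_{\pi}(z/2)=e_2\bigl(\tfrac12\trace_{K/\QQ_2}(z)\bigr)$, so this amounts to $\trace_{\kappa/\FF_2}(\bar z)=0$. Using $\bar w_0=\bar\beta_1+\bar\beta_2$ and $\bar w_1^2=\bar\beta_1\bar\beta_2$ from Lemma~\ref{thm::teichmullersum}, the $\gamma_i$- and $\beta_i$-contributions cancel and $\bar z$ collapses to $\bar w_1+\bar w_1^2$ (or to $0$ when some $\beta_i$ vanishes), whence $\trace_{\kappa/\FF_2}(\bar z)=\trace_{\kappa/\FF_2}(\bar w_1)+\trace_{\kappa/\FF_2}(\bar w_1^2)=0$ by the Frobenius-invariance of the trace — the residue-field shadow of (\ref{eqn::tracesquare}). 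Finally, $\eta(1+2a)=(-1)^{\trace_{\kappa/\FF_2}(\bar a)}$ for $a\in U$, and choosing $a$ with $\trace_{\kappa/\FF_2}(\bar a)=1$ shows $\eta\not\equiv1$; hence $\eta$ is a (nontrivial) quadratic character.

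The main obstacle is the bookkeeping of the digit carries: one must pin down $w_0,w_1$ and then $C$ to the correct precision in $\pp$, and — crucially — push the final comparison all the way down to the residue field, since for $\alpha\in\pp^{-1}$ the equality $e_{\pi}(\alpha)=1$ is \emph{not} equivalent to $\alpha\in\oo$ but rather to the vanishing of $\trace_{\kappa/\FF_2}$ of the residue of $2\alpha$. Once $\bar z$ has been computed correctly the trace vanishing is automatic, so the argument is in essence a careful unwinding of the definition combined with Lemma~\ref{thm::teichmullersum} and (\ref{eqn::tracesquare}).
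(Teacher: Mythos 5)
Your proof is correct and follows essentially the same route as the paper's: reduce to principal units via $\eta(u)=\eta(u/a)$ for $u\equiv a\pmod{\pp}$, expand the product of two principal units, resolve the carry in the second digit with Lemma~\ref{thm::teichmullersum}, and cancel the leftover term $\sqrt{\beta_1\beta_2}+\beta_1\beta_2$ using the trace identity (\ref{eqn::tracesquare}). The only cosmetic differences are that you phrase the final cancellation as the vanishing of $\trace_{\kappa/\FF_2}(\bar w_1+\bar w_1^2)$ on the residue field and add explicit well-definedness and nontriviality checks, whereas the paper works directly with $e_{\pi}$ and proves only multiplicativity.
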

\begin{proof}
We have to show that $\eta(u_1u_2)=\eta(u_1)\eta(u_2)$ for any units $u_1,u_2\in\oo^{\times}$. First, we assume that $u_1\equiv1+2b_1+4c_1\pmod{\pp^3}$ and $u_2\equiv1+2b_2+4c_2\pmod{\pp^3}$ with $b_1,c_1,b_2,c_2\in U_0$. Then we have
$$
u_1u_2\equiv 1+2(b_1+b_2)+4(c_1+c_2+b_1b_2)\pmod{\pp^3}.
$$
If $b_1=0$ or $b_2=0$, then it is easy to see that $\eta(u_1u_2)=\eta(u_1)\eta(u_2)$. If $b_1,b_2\neq0$, then applying Lemma \ref{thm::teichmullersum} to $b_1+b_2$, we see that
$$
\eta(u_1u_2)=e_{\pi}\left(\frac{c_1+c_2+b_1b_2+\sqrt{b_1b_2}-b_1-b_2}{2}\right)=e_{\pi}\left(\frac{c_1-b_1}{2}\right)e_{\pi}\left(\frac{c_2-b_2}{2}\right)=\eta(u_1)\eta(u_2).
$$
By definition, we have $\eta(u)=\eta(u/a)$ if $u\equiv a\pmod{\pp}$ with $a\in U$. Hence, in general, if $u_1\equiv a_1\pmod{\pp^3}$ and $u_2\equiv a_2\pmod{\pp^3}$ with $a_1,a_2\in U$, then we have
$$
\eta(u_1)\eta(u_2)=\eta(u_1/a_1)\eta(u_2/a_2)=\eta(u_1u_2/a_1a_2)=\eta(u_1u_2),
$$
as desired.
\end{proof}


Set $\mathbf{x}\coloneqq(x_1,\ldots,x_{r_1})$, $\mathbf{y}\coloneqq(y_{1,1},y_{1,2},\ldots,y_{r_2,1},y_{r_2,2})$, and $\mathbf{z}\coloneqq(z_{1,1},z_{1,2},\ldots,z_{r_3,1},z_{r_3,2})$ with $r_1+2r_2+2r_3=r$. Using Lemma \ref{thm::jordan} and ignoring the constant term, we may assume that the quadratic polynomial is of the form
\begin{align*}
Q(\mathbf{x},\mathbf{y},\mathbf{z})=\sum_{1\leq i\leq r_1}\left(b_ix_i^2+c_ix_i\right)&+\sum_{1\leq j\leq r_2}\left(b'_jy_{j,1}y_{j,2}+c'_{j,1}y_{j,1}+c'_{j,2}y_{j,2}\right)\\
&+\sum_{1\leq k\leq r_3}\left(b''_k(z_{k,1}^2+z_{k,1}z_{k,2}+\rho z_{k,2}^2)+c''_{k,1}z_{k,1}+c''_{k,2}z_{k,2}\right),
\end{align*}
with $b_i,c_i,b'_j,c'_{j,1},c'_{j,2},b''_k,c''_{j,1},c''_{j,2}\in\oo$ for $1\leq i\leq r_1$, $1\leq j\leq r_2$, and $1\leq k\leq r_3$, where $\rho\in\oo^{\times}$ is a fixed unit depending on the field $K$. Thus we can rewrite the local density as follows,
$$
\beta_{\pp}(n;Q)=\int_{K}e_{\pi}(-\sigma n)\prod_{1\leq i\leq r_1}I_{\pi}(\sigma b_i,\sigma c_i)\prod_{1\leq j\leq r_2}I'_{\pi}(\sigma b'_j,\sigma c'_{j,1},\sigma c'_{j,2})\prod_{1\leq k\leq r_3}I''_{\pi}(\sigma b''_k,\sigma c''_{k,1},\sigma c''_{k,2})\mathrm{d}\sigma,
$$
where $I_{\pi}$ is defined in (\ref{eqn::gaussintegral1}) and we further define
$$
I'_{\pi}(\sigma,\tau_1,\tau_2)\coloneqq\int_{\oo^2}e_{\pi}(\sigma y_1y_2+\tau_1y_1+\tau_2y_2)\mathrm{d}y_1\mathrm{d}y_2,
$$
and
$$
I''_{\pi}(\sigma,\tau_1,\tau_2)\coloneqq\int_{\oo^2}e_{\pi}(\sigma(z_1^2+z_1z_2+\rho z_2^2)+\tau_1z_1+\tau_2z_2)\mathrm{d}z_1\mathrm{d}z_2.
$$
Following the strategy in the non-dyadic cases, we proceed to evaluate the Gauss integrals $I_{\pi}$, $I'_{\pi}$, and $I''_{\pi}$. This will be achieved in a series of lemmas.

\begin{lemma}
\label{thm::gaussintegral12}
For $\sigma=u2^t\in K^{\times}$ with $u\in\oo^{\times}$ and $t\in\ZZ$, we have
$$
I_{\pi}(\sigma,0)=
\begin{dcases}
1,&\text{ if }t\geq0,\\
0,&\text{ if }t=-1,\\
(-1)^{(t+1)(f-1)}\eta(u)^{t+1}e_{\pi}(u^{2^f-1}/8)\norm(\pp)^{\frac{t+1}{2}},&\text{ if }t<-1.
\end{dcases}
$$
\end{lemma}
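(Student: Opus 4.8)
The plan is to imitate the non-dyadic proof of Lemma~\ref{thm::gaussintegral1p}: split the integral over cosets $x = x_1 + \pi^k x_2$ and expand the square. The only new wrinkle is that the cross term $2\pi^k\sigma x_1 x_2$ now carries an extra factor $\pi = 2$, so one must keep one further term of the expansion. Throughout I write $m := -t$ and let $\omega(u)\in U$ denote the leading ($U$-)digit of the $U_0$-expansion of $u$. I will use freely that $\pi = 2$, that the different of $K/\QQ_2$ is $\oo$ (so $e_{\pi}(\alpha) = 1$ iff $\alpha\in\oo$), and the consequence of (\ref{eqn::tracesquare}) that $e_{\pi}(w^2/2) = e_{\pi}(w/2)$ for all $w\in\oo$.

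The cases $t\ge 0$ (integrand $\equiv 1$) and $t = -1$ are immediate; for $t = -1$, the substitution $x = r + 2y$ with $r$ over $\oo/\pp$ and $y\in\oo$ collapses $e_{\pi}(ux^2/2)$ to $e_{\pi}(ur^2/2)$, so $I_{\pi}(u/2,0) = \norm(\pp)^{-1}\sum_{r\in\oo/\pp}e_{\pi}(ur^2/2) = 0$ by Lemma~\ref{thm::dyadicinhomogeneous} with $\tau = 0$, as $u$ is a unit. For $t\le -2$ the engine is a recursion lowering $m$ by $2$: writing $x = x_1 + 2^{m-2}x_2$ with $x_1$ over a set of representatives of $\oo/\pp^{m-2}$ and $x_2\in\oo$, one gets $u2^{-m}x^2 = u2^{-m}x_1^2 + u2^{-1}x_1x_2 + u2^{m-4}x_2^2$, and for $m\ge 4$ the last term lies in $\oo$ while Lemma~\ref{thm::padiccirclemethod} makes the $x_2$-integral of $e_{\pi}(u2^{-1}x_1x_2)$ equal to $\chi_{\pp}(x_1)$; substituting $x_1 = 2s$ and reading off the resulting sum as $\norm(\pp)^{m-3}I_{\pi}(u2^{-(m-2)},0)$ gives
$$
I_{\pi}(u2^{-m},0) = \norm(\pp)^{-1}I_{\pi}(u2^{-(m-2)},0)\qquad (m\ge 4).
$$

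So it remains to treat the base cases $m = 2$ and $m = 3$. For $m = 2$, the substitution $x = r + 2y$ yields $I_{\pi}(u2^{-2},0) = \norm(\pp)^{-1}\sum_{r\in\oo/\pp}e_{\pi}(ur^2/4)$, which is exactly Lemma~\ref{thm::dyadicgausssum} with $\sigma = u$. For $m = 3$, $x = r + 4y$ gives $I_{\pi}(u2^{-3},0) = \norm(\pp)^{-2}\sum_{r\in\oo/\pp^2}e_{\pi}(u2^{-3}r^2)$; writing $r = r_1 + 2r_2$ with $r_1,r_2\in U_0$ (a set of representatives of $\oo/\pp$), Lemma~\ref{thm::dyadicinhomogeneous} evaluates the $r_2$-sum as $\chi_{\pp}(u - u^2 r_1^2)\norm(\pp)$, which is nonzero only for the unique $r_1 = a_0\in U$ with $a_0^2 = \omega(u)^{-1}$ (squaring being a bijection of the odd-order group $U$), leaving $I_{\pi}(u2^{-3},0) = \norm(\pp)^{-1}e_{\pi}\big(u/(8\omega(u))\big)$.

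Finally one must match these against the stated formula. The power of $\norm(\pp)$ produced by iterating the recursion is $\norm(\pp)^{-(m-1)/2} = \norm(\pp)^{(t+1)/2}$ in both parities, and the prefactors $(-1)^{(t+1)(f-1)}$ and $\eta(u)^{t+1}$ are dictated by the parity of $t+1$ (they are trivial when $m$ is odd). For the root-of-unity factor, set $v := u/\omega(u)\equiv 1\pmod\pp$; a short computation with $v = 1 + 2w$ gives $v^{2^f}\equiv 1\pmod{\pp^3}$, so, since $\omega(u)^{2^f-1} = 1$, one has $u^{2^f-1} = v^{2^f-1}\equiv v^{-1}\pmod{\pp^3}$. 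Using $e_{\pi}(w^2/2) = e_{\pi}(w/2)$ once more yields $e_{\pi}(v/8) = e_{\pi}(v^{-1}/8)$, turning the $m = 3$ value into $e_{\pi}(u^{2^f-1}/8)\norm(\pp)^{-1}$; and feeding the Teichm\"uller expansion $u\equiv\omega(u) + 2b + 4c\pmod{\pp^3}$ into the definition of $\eta$ turns the $m = 2$ value $(-1)^{f-1}e_{\pi}\big((\omega(u) - 2b)/(8\omega(u))\big)\norm(\pp)^{-1/2}$ into $(-1)^{f-1}\eta(u)e_{\pi}(u^{2^f-1}/8)\norm(\pp)^{-1/2}$. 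I expect the main obstacle to be precisely this last step: establishing the congruence $u^{2^f-1}\equiv v^{-1}\pmod{\pp^3}$ and verifying that the two parity-dependent base values genuinely assemble into the single uniform formula written in terms of $\eta$ and $u^{2^f-1}$; the coset-splitting itself is routine and parallels the non-dyadic section.
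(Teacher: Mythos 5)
Your argument is correct and is essentially the paper's proof: both reduce $I_{\pi}(\sigma,0)$ by coset splitting to the base Gauss sums $\sum_{r\in\oo/\pp}e_{\pi}(ur^2/4)$ and $\sum_{r\in\oo/\pp^2}e_{\pi}(ur^2/8)$ and evaluate them via Lemmas \ref{thm::dyadicinhomogeneous} and \ref{thm::dyadicgausssum}; your two-step recursion is just the paper's single decomposition with $k=\lfloor(1-t)/2\rfloor$ unrolled. The only substantive difference is that you spell out the reconciliation of $e_{\pi}\bigl(\frac{a-2b}{8a}\bigr)$ and $e_{\pi}(u/8\omega(u))$ with $\eta(u)e_{\pi}(u^{2^f-1}/8)$ via $u^{2^f-1}\equiv v^{-1}\pmod{\pp^3}$ and (\ref{eqn::tracesquare}), a step the paper leaves implicit behind its citations of Lemmas \ref{thm::dyadicgausssum} and \ref{thm::character}.
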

\begin{proof}
If $t\geq0$, then it is clear that $I_{\pi}(\sigma,0)=1$. If $t<0$, we set
$$
k\coloneqq\left\lfloor\frac{1-t}{2}\right\rfloor\geq1,
$$
and then the same argument in the proof of Lemma \ref{thm::gaussintegral1p} gives
$$
I_{\pi}(\sigma,0)=\frac{1}{\norm(\pp)^k}\sum_{r\in\oo/\pp^k}e_{\pi}\left(\sigma r^2\right)\chi_{\oo}(\pi^{k+1}\sigma r).
$$
Then the formula for $t=-1$ follows from Lemma \ref{thm::dyadicinhomogeneous}, and the formula for any even integer $t\leq-2$ follows from Lemma \ref{thm::dyadicgausssum} and Lemma \ref{thm::character}. Finally, if $t\leq-3$ and $t$ is odd, then we have
$$
I_{\pi}(\sigma,0)=\frac{1}{\norm(\pp)^k}\sum_{r\in\oo/\pp^2}e_{\pi}\left(\frac{ur^2}{8}\right)=\norm(\pp)^{\frac{t-1}{2}}\sum_{r\in\oo/\pp}e_{\pi}\left(\frac{ur^2}{8}\right)\sum_{\rho\in\oo/\pp}e_{\pi}\left(\frac{u\rho^2+ur\rho}{2}\right).
$$
By Lemma \ref{thm::dyadicinhomogeneous}, the inner sum vanishes if and only if $ur^2\equiv 1\pmod{\pp}$, which is equivalent to $r\equiv u^{2^{f-1}-1}\pmod{\pp}$. Thus, we have
$$
I_{\pi}(\sigma,0)=e_{\pi}(u^{2^f-1}/8)\norm(\pp)^{\frac{t+1}{2}},
$$
as desired.
\end{proof}

\begin{lemma}
\label{thm::gaussintegral12inhomogeneous}
For $\sigma\in K^{\times}$ and $\tau\in K$, we set $t\coloneqq\min(\ord_{\pp}(\sigma),\ord_{\pp}(\tau))$ and we have
$$
I_{\pi}(\sigma,\tau)=
\begin{dcases}
1,&\text{ if }t\geq0,\\
0,&\text{ if }t<0,\ord_{\pp}(\sigma)>\ord_{\pp}(\tau),\\
\chi_{\pp}(\pi\sigma-\pi^2\tau^2),&\text{ if }t=-1,\ord_{\pp}(\sigma)=\ord_{\pp}(\tau),\\
0,&\text{ if }t<-1,\ord_{\pp}(\sigma)=\ord_{\pp}(\tau),\\
e_{\pi}\left(-\tau^2/4\sigma\right)I_{\pi}(\sigma,0),&\text{ if }t<0,\ord_{\pp}(\sigma)<\ord_{\pp}(\tau).
\end{dcases}
$$
\end{lemma}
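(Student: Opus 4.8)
The plan is to adapt the proof of Lemma~\ref{thm::gaussintegral1pinhomogeneous} to the dyadic setting, where $\pi=2$ so that dividing by $2$ is no longer harmless; the argument splits into cases according to how $\ord_{\pp}(\sigma)$ compares with $\ord_{\pp}(\tau)$ (this decides which of them realises the minimum $t$) and how negative $t$ is. If $t\geq0$, then $\sigma,\tau\in\oo$, hence $\sigma x^{2}+\tau x\in\oo$ for every $x\in\oo$ and the integrand is identically $1$, giving $I_{\pi}(\sigma,\tau)=1$. For $t<0$ there are three remaining regimes: $\ord_{\pp}(\sigma)>\ord_{\pp}(\tau)$, $\ord_{\pp}(\sigma)=\ord_{\pp}(\tau)$, and $\ord_{\pp}(\sigma)<\ord_{\pp}(\tau)$, and the middle one splits further according to whether $t=-1$ or $t\leq-2$.

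For the vanishing cases --- namely $\ord_{\pp}(\sigma)>\ord_{\pp}(\tau)=t$, and $\ord_{\pp}(\sigma)=\ord_{\pp}(\tau)=t$ with $t\leq-2$ --- I would substitute $x\mapsto x+2^{-t-1}y$ for $y\in\oo$, which is a translation of $\oo$ by an element of $\oo$ since $-t-1\geq0$. Expanding, the integrand is multiplied by $e_{\pi}\bigl(2^{-t}\sigma xy+2^{-2t-2}\sigma y^{2}+2^{-t-1}\tau y\bigr)$. One checks $\ord_{\pp}(2^{-t}\sigma)\geq0$ and $\ord_{\pp}(2^{-2t-2}\sigma)\geq0$ in either subcase --- this is where one uses $\ord_{\pp}(\sigma)\geq t+1$ in the first and $t\leq-2$ in the second --- so the first two terms disappear from $e_{\pi}$, while $\ord_{\pp}(2^{-t-1}\tau)=-1$. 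Hence $I_{\pi}(\sigma,\tau)=e_{\pi}(2^{-t-1}\tau y)\,I_{\pi}(\sigma,\tau)$ for every $y\in\oo$, and picking $y$ with $e_{\pi}(2^{-t-1}\tau y)\neq1$ (possible by property~(3) of $e_{\pi}$) forces $I_{\pi}(\sigma,\tau)=0$.

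When $\ord_{\pp}(\sigma)<\ord_{\pp}(\tau)$, completing the square is legitimate: $\ord_{\pp}(\tau/2\sigma)=\ord_{\pp}(\tau)-\ord_{\pp}(\sigma)-1\geq0$, so the translation $x\mapsto x-\tau/2\sigma$ preserves $\oo$ and yields $I_{\pi}(\sigma,\tau)=e_{\pi}(-\tau^{2}/4\sigma)\,I_{\pi}(\sigma,0)$, exactly as in the non-dyadic proof, and $I_{\pi}(\sigma,0)$ is already given by Lemma~\ref{thm::gaussintegral12}.

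The remaining case $t=-1$, $\ord_{\pp}(\sigma)=\ord_{\pp}(\tau)=-1$ is the crux, and I expect it to be the main obstacle: completing the square fails because $\tau/2\sigma\notin\oo$, and the shift $x\mapsto x+y$ now contributes a square term $\sigma y^{2}$ of order $-1$, so it no longer annihilates the integrand. Here I would write $\sigma=u/2$, $\tau=v/2$ with $u,v\in\oo^{\times}$, decompose $\oo=\bigsqcup_{r\in\oo/\pp}(r+\pp)$, and substitute $x=r+2w$ on each coset; since $\pi=2$, every $w$-dependent term picks up enough factors of $2$ to lie in $\oo$, leaving
\[
I_{\pi}(\sigma,\tau)=\frac{1}{\norm(\pp)}\sum_{r\in\oo/\pp}e_{\pi}\left(\frac{ur^{2}+vr}{2}\right).
\]
Lemma~\ref{thm::dyadicinhomogeneous} evaluates this sum as $\chi_{\pp}(u-v^{2})\,\norm(\pp)$, so $I_{\pi}(\sigma,\tau)=\chi_{\pp}(u-v^{2})$, and one concludes by the identity $\pi\sigma-\pi^{2}\tau^{2}=u-v^{2}$. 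The only delicate point throughout is tracking powers of $2$ carefully enough to see which correction terms fall into $\oo$; the essential external input is Gurak's Gauss-sum evaluation recorded in Lemma~\ref{thm::dyadicinhomogeneous}.
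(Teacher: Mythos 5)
Your proposal is correct and follows essentially the same route as the paper: trivial case for $t\geq0$, a translation argument for the vanishing cases, completion of the square when $\ord_{\pp}(\sigma)<\ord_{\pp}(\tau)$, and the coset decomposition modulo $\pp$ combined with Lemma~\ref{thm::dyadicinhomogeneous} for the crux case $t=-1$ with equal orders. The only (cosmetic) difference is that for $\ord_{\pp}(\sigma)=\ord_{\pp}(\tau)=t\leq-2$ you use a shift $x\mapsto x+2^{-t-1}y$ rather than the paper's decomposition into cosets modulo $\pp^{k}$ and the observation that $\pi^{k}\tau\notin\oo$; both are two-line arguments and your order bookkeeping checks out.
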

\begin{proof}
The proof is essentially the same as the proof of Lemma \ref{thm::gaussintegral1pinhomogeneous}, except the case $\ord_{\pp}(\sigma)=\ord_{\pp}(\tau)$. Assume that $\ord_{\pp}(\sigma)=\ord_{\pp}(\tau)=t<0$. We set
$$
k\coloneqq\left\lfloor\frac{1-t}{2}\right\rfloor\geq1.
$$
Then we have
$$
I_{\pi}(\sigma,\tau)=\frac{1}{\norm(\pp)^k}\sum_{r\in\oo/\pp^k}e_{\pi}(\sigma r^2+\tau r)\int_{\oo}e_{\pi}(\pi^kx(\tau+\pi\sigma r))\mathrm{d}x=\frac{\chi_{\oo}(\pi^k\tau)}{\norm(\pp)^k}\sum_{r\in\oo/\pp^k}e_{\pi}(\sigma r^2+\tau r).
$$
If $t<-1$, then we have $\pi^k\tau\not\in\oo$ and thus $I_{\pi}(\sigma,\tau)=0$. If $t=-1$, then the desired formula follows from Lemma \ref{thm::dyadicinhomogeneous}.
\end{proof}

\begin{lemma}
\label{thm::gaussintegral2inhomegeneous}
For $\sigma\in K^{\times}$ and $\tau_1,\tau_2\in K$, we set $t\coloneqq\min(\ord_{\pp}(\sigma),\ord_{\pp}(\tau_1),\ord_{\pp}(\tau_2))$ and we have
$$
I'_{\pi}(\sigma,\tau_1,\tau_2)=
\begin{dcases}
1,&\text{ if }t\geq0,\\
0,&\text{ if }t<0,\ord_{\pp}(\sigma)>\min(\ord_{\pp}(\tau_1),\ord_{\pp}(\tau_2)),\\
e_{\pi}\left(-\tau_1\tau_2/\sigma\right)\norm(\pp)^{t},&\text{ if }t<0,\ord_{\pp}(\sigma)\leq\min(\ord_{\pp}(\tau_1),\ord_{\pp}(\tau_2)).
\end{dcases}
$$
\end{lemma}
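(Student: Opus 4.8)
The plan is to follow the same pattern as the proof of Lemma \ref{thm::gaussintegral1pinhomogeneous}, treating the three cases of the statement separately and reducing each one to an application of Lemma \ref{thm::padiccirclemethod}. When $t\geq0$ we have $\sigma,\tau_1,\tau_2\in\oo$, so $\sigma y_1y_2+\tau_1y_1+\tau_2y_2\in\oo$ for every $(y_1,y_2)\in\oo^2$, and property (2) of $e_{\pi}$ gives $I'_{\pi}(\sigma,\tau_1,\tau_2)=\int_{\oo^2}1\,\mathrm{d}y_1\,\mathrm{d}y_2=1$ at once.

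For the vanishing case, suppose $t<0$ and (after possibly swapping the roles of $y_1$ and $y_2$, which is symmetric) that $\ord_{\pp}(\tau_1)=\min(\ord_{\pp}(\tau_1),\ord_{\pp}(\tau_2))<\ord_{\pp}(\sigma)$. I would integrate over $y_1$ first: by Lemma \ref{thm::padiccirclemethod} with $t=0$,
$$
\int_{\oo}e_{\pi}\bigl((\sigma y_2+\tau_1)y_1\bigr)\,\mathrm{d}y_1=\chi_{\oo}(\sigma y_2+\tau_1).
$$
Since $\ord_{\pp}(\sigma y_2)\geq\ord_{\pp}(\sigma)>\ord_{\pp}(\tau_1)$, the ultrametric inequality forces $\ord_{\pp}(\sigma y_2+\tau_1)=\ord_{\pp}(\tau_1)<0$ for all $y_2\in\oo$, so $\sigma y_2+\tau_1\notin\oo$ and the characteristic function is identically zero; integrating over $y_2$ yields $I'_{\pi}(\sigma,\tau_1,\tau_2)=0$.

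For the remaining case, suppose $t<0$ with $\ord_{\pp}(\sigma)\leq\ord_{\pp}(\tau_1)$ and $\ord_{\pp}(\sigma)\leq\ord_{\pp}(\tau_2)$, so that $\ord_{\pp}(\sigma)=t$ and $\tau_1/\sigma,\tau_2/\sigma\in\oo$. The key step is the hyperbolic analogue of completing the square: from
$$
\sigma y_1y_2+\tau_1y_1+\tau_2y_2=\sigma\Bigl(y_1+\frac{\tau_2}{\sigma}\Bigr)\Bigl(y_2+\frac{\tau_1}{\sigma}\Bigr)-\frac{\tau_1\tau_2}{\sigma},
$$
and the fact that translating $y_1$ by $-\tau_2/\sigma\in\oo$ and $y_2$ by $-\tau_1/\sigma\in\oo$ preserves $\oo$, one gets
$$
I'_{\pi}(\sigma,\tau_1,\tau_2)=e_{\pi}\Bigl(-\frac{\tau_1\tau_2}{\sigma}\Bigr)\int_{\oo^2}e_{\pi}(\sigma y_1y_2)\,\mathrm{d}y_1\,\mathrm{d}y_2.
$$
Integrating the last integral over $y_1$ and applying Lemma \ref{thm::padiccirclemethod} gives $\chi_{\oo}(\sigma y_2)=\chi_{\pp^{-t}}(y_2)$, and then $\int_{\pp^{-t}}\mathrm{d}y_2=\norm(\pp)^{t}$, producing the asserted value $e_{\pi}(-\tau_1\tau_2/\sigma)\norm(\pp)^{t}$.

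I do not anticipate a genuine obstacle: the argument is a routine case analysis built on Lemma \ref{thm::padiccirclemethod}. The only points demanding a little care are choosing the correct variable to integrate out in the vanishing case and verifying $\tau_1/\sigma,\tau_2/\sigma\in\oo$ so that the change of variables in the final case is legitimate, both of which are immediate from the hypotheses on orders.
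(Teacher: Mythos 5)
Your proposal is correct and follows essentially the same route as the paper: integrate out one variable via Lemma \ref{thm::padiccirclemethod} to produce a characteristic function, and in the last case complete the product and translate by $-\tau_2/\sigma,-\tau_1/\sigma\in\oo$ (the paper shifts only $y_2$ after the first integration, but this is the same computation). All the order-of-vanishing checks you flag are handled correctly.
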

\begin{proof}
The first case is clear. For the second case, without loss of generality, we may assume that $\ord_{\pp}(\sigma)>\ord_{\pp}(\tau_1)$. Then by Lemma \ref{thm::padiccirclemethod} we have
\begin{equation}
\label{eqn::hyperbolicintermediate}
I'_{\pi}(\sigma,\tau_1,\tau_2)=\int_{\oo}e_{\pi}(\tau_2y_2)\int_{\oo}e_{\pi}((\sigma y_2+\tau_1)y_1)\mathrm{d}y_1\mathrm{d}y_2=\int_{\oo}e_{\pi}(\tau_2y_2)\chi_{\oo}(\sigma y_2+\tau_1)\mathrm{d}y_2.
\end{equation}
Since $\ord_{\pp}(\sigma)>\ord_{\pp}(\tau_1)$, we see that $\chi_{\oo}(\sigma y_2+\tau_1)=0$ for $y_2\in\oo$. This shows the second case. For the third case, replacing $y_2$ by $y_2-\tau_1/\sigma$ in (\ref{eqn::hyperbolicintermediate}) and using the assumption $\ord_{\pp}(\sigma)<\ord_{\pp}(\tau_2)$, we see that
$$
I'_{\pi}(\sigma,\tau_1,\tau_2)=e_{\pi}\left(-\frac{\tau_1\tau_2}{\sigma}\right)\int_{\oo}e_{\pi}(\tau_2y_2)\chi_{\oo}(\sigma y_2)\mathrm{d}y_2=e_{\pi}\left(-\frac{\tau_1\tau_2}{\sigma}\right)\norm(\pp)^t,
$$
as desired.
\end{proof}
\begin{remark}
The proof doesn't make use of the assumption that $K/\QQ_2$ is unramified. So the result remains valid for ramified dyadic extensions.
\end{remark}

\begin{lemma}
\label{thm::gaussintegral3inhomegeneous}
For $\sigma\in K^{\times}$ and $\tau_1,\tau_2\in K$, we set $t\coloneqq\min(\ord_{\pp}(\sigma),\ord_{\pp}(\tau_1),\ord_{\pp}(\tau_2))$ and we have
$$
I''_{\pi}(\sigma,\tau_1,\tau_2)=
\begin{dcases}
1,&\text{if }t\geq0,\\
0,&\text{if }t<0,\ord_{\pp}(\sigma)>\min(\ord_{\pp}(\tau_1),\ord_{\pp}(\tau_2)),\\
(-1)^{\trace_{K/\QQ_2}(\rho)t}e_{\rho}(\sigma,\tau_1,\tau_2)\norm(\pp)^t,&\text{if }t<0,\ord_{\pp}(\sigma)\leq\min(\ord_{\pp}(\tau_1),\ord_{\pp}(\tau_2)),
\end{dcases}
$$
where we set
$$
e_{\rho}(\sigma,\tau_1,\tau_2)\coloneqq e_{\pi}\left(\frac{\tau_1\tau_2-\rho\tau_1^2-\tau_2^2}{(4\rho-1)\sigma}\right).
$$
\end{lemma}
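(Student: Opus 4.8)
The structure follows the proofs of Lemmas \ref{thm::gaussintegral1pinhomogeneous} and \ref{thm::gaussintegral12inhomogeneous}. The case $t\geq0$ is immediate, since the argument of $e_\pi$ lies in $\oo$. When $t<0$ and $\ord_\pp(\sigma)>\min(\ord_\pp(\tau_1),\ord_\pp(\tau_2))$, I would prove vanishing by translation: assuming, say, $\ord_\pp(\tau_1)=t<\ord_\pp(\sigma)$, replace $z_1$ by $z_1+2^{-t-1}w$ with $w\in\oo$ (a valid substitution on $\oo^2$, as $-t-1\geq0$). Since $\ord_\pp(\sigma)\geq t+1$, the newly created terms $2\sigma2^{-t-1}wz_1$, $\sigma2^{-2t-2}w^2$, $\sigma2^{-t-1}wz_2$ are all integral, whereas $\tau_12^{-t-1}w=u2^{-1}w\notin\oo$ for $w\in\oo^\times$; hence the resulting identity $I''_\pi(\sigma,\tau_1,\tau_2)=e_\pi(\tau_12^{-t-1}w)\,I''_\pi(\sigma,\tau_1,\tau_2)$ forces the integral to be $0$. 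If the minimum is attained at $\tau_2$, one translates $z_2$ instead.

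For the remaining case $t<0$ with $\ord_\pp(\sigma)\leq\min(\ord_\pp(\tau_1),\ord_\pp(\tau_2))$, so $\ord_\pp(\sigma)=t$, I would complete the square. A direct calculation gives
$$
\sigma(z_1^2+z_1z_2+\rho z_2^2)+\tau_1z_1+\tau_2z_2=\sigma\big((z_1-a)^2+(z_1-a)(z_2-b)+\rho(z_2-b)^2\big)+\frac{\tau_1\tau_2-\rho\tau_1^2-\tau_2^2}{(4\rho-1)\sigma},
$$
with $a=\frac{\tau_2-2\rho\tau_1}{(4\rho-1)\sigma}$ and $b=\frac{\tau_1-2\tau_2}{(4\rho-1)\sigma}$. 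The crucial observation is that $4\rho-1\in\oo^\times$ (because $4\in\pp^2$), so together with $t=\ord_\pp(\sigma)\leq\min(\ord_\pp(\tau_1),\ord_\pp(\tau_2))$ this forces $a,b\in\oo$; hence $(z_1,z_2)\mapsto(z_1+a,z_2+b)$ is a measure-preserving bijection of $\oo^2$, and we obtain $I''_\pi(\sigma,\tau_1,\tau_2)=e_\rho(\sigma,\tau_1,\tau_2)\,I''_\pi(\sigma,0,0)$.

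It then remains to show $I''_\pi(\sigma,0,0)=(-1)^{\trace_{K/\QQ_2}(\rho)t}\norm(\pp)^t$ for $\ord_\pp(\sigma)=t<0$. Integrating out $z_1$ and applying Lemma \ref{thm::gaussintegral12inhomogeneous} gives
$$
I''_\pi(\sigma,0,0)=\int_\oo e_\pi(\sigma\rho z_2^2)\,I_\pi(\sigma,\sigma z_2)\,\mathrm{d}z_2,
$$
which I would split over $z_2\in\pp$ and $z_2\in\oo^\times$. On $\pp$, writing $z_2=2w$ and using $\rho-\tfrac14=\tfrac{4\rho-1}{4}$ produces $\norm(\pp)^{-1}I_\pi(\sigma,0)I_\pi((4\rho-1)\sigma,0)$. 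On $\oo^\times$, Lemma \ref{thm::gaussintegral12inhomogeneous} makes the contribution vanish when $t\leq-2$, while for $t=-1$ it concentrates on the residue disc where $uz_2^2\equiv1\pmod\pp$ and equals $\norm(\pp)^{-1}e_\pi(\rho/2)=(-1)^{\trace(\rho)}\norm(\pp)^{-1}$, which is the claimed value for $t=-1$ (the $\pp$-part vanishes there, as $I_\pi(\sigma,0)=0$). For $t\leq-2$ one substitutes the formula of Lemma \ref{thm::gaussintegral12}, uses that $\eta$ is a quadratic character (Lemma \ref{thm::character}) to cancel the factor $\eta(u)^{2(t+1)}$, and uses $(4\rho-1)^{2^f-1}\equiv-1\pmod{\pp^2}$ together with $u^{2^f-1}\equiv1\pmod\pp$ to eliminate the $u$-dependence of the $e_\pi(\,\cdot/8)$-factors; this reduces $I''_\pi(\sigma,0,0)$ to $\eta(4\rho-1)^{t+1}e_\pi\!\big(\tfrac{1+(4\rho-1)^{2^f-1}}{8}\big)\norm(\pp)^t$.

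The main obstacle is the final sign identity
$$
e_\pi\!\left(\frac{1+(4\rho-1)^{2^f-1}}{8}\right)=(-1)^{\trace_{K/\QQ_2}(\rho)}=\eta(4\rho-1);
$$
granting it, $\eta(4\rho-1)^{t+1}e_\pi\!\big(\tfrac{1+(4\rho-1)^{2^f-1}}{8}\big)=\big((-1)^{\trace(\rho)}\big)^{t+2}=(-1)^{\trace(\rho)t}$, completing the proof. For the first equality, a binomial expansion using $4\rho\in\pp^2$ gives $(4\rho-1)^{2^f-1}\equiv4\rho-1\pmod{\pp^3}$, hence $\tfrac{1+(4\rho-1)^{2^f-1}}{8}\equiv\tfrac{\rho}{2}\pmod\oo$, and $e_\pi(\rho/2)=(-1)^{\trace(\rho)}$ follows from the definition of $e_\pi$ and $\trace_{K/\QQ_2}(\oo)\subseteq\ZZ_2$ (the different being $\oo$). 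For the second, one reads off $4\rho-1\equiv1+2\cdot1+4c\pmod{\pp^3}$, where $c$ is the Teichm\"uller representative of $\overline\rho+1$, plugs into \eqref{eqn::defineeta} to get $\eta(4\rho-1)=e_\pi((c-1)/2)$, and reduces modulo $2$ using \eqref{eqn::tracesquare} and the fact that $\trace_{K/\QQ_2}$ reduces to $\trace_{\kappa/\FF_2}$ on $\oo$; the quadratic-defect property of $\rho$ is what pins down $c$. Both equalities are elementary but delicate, and it is essential that all three quantities coincide (not merely two of them), since otherwise the parity of $t+1$ would spoil the final simplification.
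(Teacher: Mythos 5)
Your proposal is correct, and in the two substantive steps it takes a genuinely different route from the paper. For the vanishing case you use the translation trick $z_1\mapsto z_1+2^{-t-1}w$ (the argument of Lemma \ref{thm::gaussintegral1pinhomogeneous}), whereas the paper integrates out $z_2$ first and invokes the vanishing case of Lemma \ref{thm::gaussintegral12inhomogeneous} applied to $I_{\pi}(\sigma\rho,\sigma z_1+\tau_2)$; both are valid, and your order-of-valuation checks are right. The completion of the square is identical to the paper's (same shifts $u_1,u_2$, and your constant $\frac{\tau_1\tau_2-\rho\tau_1^2-\tau_2^2}{(4\rho-1)\sigma}$ is correct). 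The real divergence is in evaluating $I''_{\pi}(\sigma,0,0)$: the paper completes the square in $z_2$ via the coset decomposition $\oo=\bigcup_{r\in\oo/\pp^k}(r+\pp^k)$, treats $t$ even and odd separately, and extracts the sign $(-1)^{\trace_{K/\QQ_2}(\rho)}$ directly from a double character sum via Lemma \ref{thm::dyadicinhomogeneous}, never using the explicit formula of Lemma \ref{thm::gaussintegral12} or the character $\eta$. You instead integrate out $z_1$ to get $\int_{\oo}e_{\pi}(\sigma\rho z_2^2)I_{\pi}(\sigma,\sigma z_2)\,\mathrm{d}z_2$, split over $z_2\in\pp$ versus $z_2\in\oo^{\times}$, and reduce to the product $\norm(\pp)^{-1}I_{\pi}(\sigma,0)I_{\pi}((4\rho-1)\sigma,0)$ plus (for $t=-1$) the unit-disc contribution; this is more modular but forces you through the sign identity $e_{\pi}\bigl((1+(4\rho-1)^{2^f-1})/8\bigr)=(-1)^{\trace_{K/\QQ_2}(\rho)}=\eta(4\rho-1)$, which the paper never needs. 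I checked that identity: $(4\rho-1)^{2^f-1}\equiv 4\rho-1\pmod{\pp^3}$ gives the first equality, and writing $4\rho-1\equiv 1+2+4c\pmod{\pp^3}$ with $\bar c=1+\bar\rho$ gives $\eta(4\rho-1)=e_{\pi}((c-1)/2)=(-1)^{\trace_{\kappa/\FF_2}(\bar\rho)}$, so all three quantities do coincide and the parities work out to $(-1)^{\trace_{K/\QQ_2}(\rho)t}$. One small inaccuracy: the quadratic-defect property of $\rho$ plays no role in pinning down $c$ (only $\bar\rho$ does); the identity, like the whole lemma, holds for an arbitrary unit $\rho$.
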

\begin{proof}
If $t\geq0$, then it is clear that $I''_{\pi}(\sigma,\tau_1,\tau_2)=1$. Suppose that $t<0$ in the following. First, we prove the lemma when $\tau_1=\tau_2=0$. Set
$$
k\coloneqq\left\lfloor\frac{1-t}{2}\right\rfloor\geq1.
$$
Then the same argument in the proof of Lemma \ref{thm::gaussintegral1p} gives
$$
\int_{\oo}e_{\pi}(\sigma(z_1z_2+\rho z_2^2))\mathrm{d}z_2=\frac{1}{\norm(\pp)^k}\sum_{r\in\oo/\pp^k}e_{\pi}(\sigma(z_1r+\rho r^2))\chi_{\pp^{-t-k}}(z_1+2\rho r).
$$
Thus, we see that
\begin{align*}
I''_{\pi}(\sigma,0,0)=&~\frac{1}{\norm(\pp)^k}\sum_{r\in\oo/\pp^k}\int_{\oo}e_{\pi}(\sigma(z_1^2+z_1r+\rho r^2))\chi_{\pp^{-t-k}}(z_1+2\rho r)\mathrm{d}z_1\\
=&~\frac{1}{\norm(\pp)^k}\sum_{r\in\oo/\pp^k}\int_{-2\rho r+\pp^{-t-k}}e_{\pi}(\sigma(z_1^2+z_1r+\rho r^2))\mathrm{d}z_1\\
=&~\norm(\pp)^t\sum_{r\in\oo/\pp^k}e_{\pi}((4\rho^2-\rho)\sigma r^2)\int_{\oo}e_{\pi}(\sigma(2^{-2t-2k}z_1^2+2^{-t-k}(1-4\rho)rz_1))\mathrm{d}z_1.
\end{align*}
If $t$ is even, then $t+2k=0$. By Lemma \ref{thm::padiccirclemethod} we see that
$$
I''_{\pi}(\sigma,0,0)=\norm(\pp)^t\sum_{r\in\oo/\pp^k}e_{\pi}((4\rho^2-\rho)\sigma r^2)\int_{\oo}e_{\pi}(2^{-t-k}(1-4\rho)\sigma rz_1)\mathrm{d}z_1=\norm(\pp)^t.
$$
If $t$ is odd, then $t+2k=1$. Write $\sigma=u2^t$ with $u\in\oo^{\times}$ and $t\in\ZZ$. Splitting the integral region $\oo=\cup_{s\in\oo/\pp}(s+\pp)$, using the change of the variable $z_1\to s+2z_1$, and then applying Lemma \ref{thm::padiccirclemethod} again, we have
$$
\int_{\oo}e_{\pi}(\sigma(2^{-2t-2k}z_1^2+2^{-t-k}(1-4\rho)rz_1))\mathrm{d}z_1=\frac{\chi_{\pp^{k-1}}(r)}{\norm(\pp)}\sum_{s\in\oo/\pp}e_{\pi}\left(\frac{us^2}{2}\right)e_{\pi}\left(\frac{(1-4\rho)urs}{2^k}\right).
$$
By Lemma \ref{thm::dyadicinhomogeneous}, we conclude that
\begin{align*}
I''_{\pi}(\sigma,0,0)=&~\norm(\pp)^{t-1}\sum_{r\in\pp^{k-1}/\pp^k}e_{\pi}((4\rho^2-\rho)\sigma r^2)\sum_{s\in\oo/\pp}e_{\pi}\left(\frac{us^2}{2}\right)e_{\pi}\left(\frac{(1-4\rho)urs}{2^k}\right)\\
=&~\norm(\pp)^{t-1}\sum_{r\in\oo/\pp}e_{\pi}\left(\frac{u\rho r^2}{2}\right)\sum_{s\in\oo/\pp}e_{\pi}\left(\frac{us^2+urs}{2}\right)=(-1)^{\trace_{K/\QQ_2}(\rho)}\norm(\pp)^t.
\end{align*}
Finally, we prove the formula in general. If $\ord_{\pp}(\sigma)>\min(\ord_{\pp}(\tau_1),\ord_{\pp}(\tau_2))$, without loss of generality, we suppose that $\ord_{\pp}(\tau_1)\geq\ord_{\pp}(\tau_2)$. By Lemma \ref{thm::gaussintegral12inhomogeneous}, we have
\begin{align*}
I''_{\pi}(\sigma,\tau_1,\tau_2)=&\int_{\oo}e_{\pi}(\sigma z_1^2+\tau_1z_1)\int_{\oo}e_{\pi}(\sigma\rho z_2^2+(\sigma z_1+\tau_2)z_2)\mathrm{d}z_2\mathrm{d}z_1\\
=&\int_{\oo}e_{\pi}(\sigma z_1^2+\tau_1z_1)I_{\pi}(\sigma\rho,\sigma z_1+\tau_2)\mathrm{d}z_1=0.
\end{align*}
If $\ord_{\pp}(\sigma)\leq\min(\ord_{\pp}(\tau_1),\ord_{\pp}(\tau_2))$, then applying the change of variables $z_1\to z_1+u_1$ and $z_2\to z_2+u_2$ with
$$
u_1\coloneqq\frac{\tau_2-2\rho\tau_1}{(4\rho-1)\sigma},\quad u_2\coloneqq\frac{\tau_1-2\tau_2}{(4\rho-1)\sigma},
$$
we see that
$$
I''_{\pi}(\sigma,\tau_1,\tau_2)=e_{\pi}\left(\frac{\tau_1\tau_2-\rho\tau_1^2-\tau_2^2}{(4\rho-1)\sigma}\right)I''_{\pi}(\sigma,0,0)=(-1)^{\trace_{K/\QQ_2}(\rho)t}e_{\rho}(\sigma,\tau_1,\tau_2)\norm(\pp)^t,
$$
as desired.
\end{proof}

Finally, we have to evaluate one more integral. For $a\in U$, $\alpha\in K$, $m\in\oo$, and $\ell\in\ZZ$, we set
$$
I_a^{\times}(\alpha,m,\ell)\coloneqq\int_{a+\pp}\eta(\sigma)^{\ell}e_{\pi}\left(\sigma\alpha+\sigma^{2^f-1}m/8\right)\mathrm{d}\sigma.
$$

\begin{lemma}
\label{thm::twistlinearintegral2I}
For $a\in U$, $\alpha\in K$, and $m\in(1+\pp)\cup\pp$, we have
$$
I_a^{\times}(\alpha,m,0)=\frac{\chi_{\pp^2}(8\alpha a+m)}{\norm(\pp)}e_{\pi}\left(\frac{8\alpha a+m}{8}\right).
$$
\end{lemma}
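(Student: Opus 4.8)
The plan is to first observe that $\eta(\sigma)^0=1$, so that $I_a^\times(\alpha,m,0)=\int_{a+\pp}e_\pi\!\left(\sigma\alpha+\sigma^{2^f-1}m/8\right)\mathrm{d}\sigma$, then to perform the substitution $\sigma=a+2\tau$ with $\tau\in\oo$ and reduce the resulting one-variable integral to the Gauss integrals already evaluated in Lemmas \ref{thm::padiccirclemethod}, \ref{thm::gaussintegral12}, and \ref{thm::gaussintegral12inhomogeneous}.

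First I would establish the congruence
$$
\sigma^{2^f-1}\equiv 1-2a^{-1}\tau+4a^{-2}\tau^2\pmod{\pp^3}\qquad(\sigma=a+2\tau,\ \tau\in\oo).
$$
Writing $\sigma=a\epsilon$ with $\epsilon=1+2a^{-1}\tau\in1+\pp$ and using $a^{2^f}=a$ (as $a\in U$), this reduces to $\epsilon^{2^f}\equiv1\pmod{\pp^3}$, which follows from the elementary fact that $\epsilon^4\in1+\pp^3$ for every principal unit $\epsilon$ together with $4\mid2^f$ for $f\geq2$ (the case $f=1$ being immediate); one then divides by $\sigma$ and expands the geometric series for $\epsilon^{-1}$. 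Since $m\in\oo$, replacing $\sigma^{2^f-1}$ by this truncation changes the argument of $e_\pi$ by an element of $\oo$ and hence does not affect $e_\pi$. Substituting $\sigma=a+2\tau$ (so $\mathrm{d}\sigma=\norm(\pp)^{-1}\mathrm{d}\tau$) and pulling out the $\tau$-independent part of the exponent yields
$$
I_a^\times(\alpha,m,0)=\frac{e_\pi\!\left(a\alpha+m/8\right)}{\norm(\pp)}\int_{\oo}e_\pi\!\left(c_1\tau+c_2\tau^2\right)\mathrm{d}\tau,\qquad c_1\coloneqq\frac{8a\alpha-m}{4a},\quad c_2\coloneqq\frac{a^{-2}m}{2}.
$$
As $a\alpha+m/8=(8\alpha a+m)/8$, the prefactor is precisely the exponential in the asserted formula, so it remains to identify the remaining integral with $\chi_{\pp^2}(8\alpha a+m)$.

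If $m\in\pp$, then $c_2\in\oo$, so $e_\pi(c_2\tau^2)=1$ and Lemma \ref{thm::padiccirclemethod} gives the integral as $\chi_\oo(c_1)=\chi_{\pp^2}(8a\alpha-m)$, which equals $\chi_{\pp^2}(8\alpha a+m)$ because $2m\in\pp^2$. If $m\in1+\pp$, then $\ord_\pp(c_2)=-1$, so the integral is $I_\pi(c_2,c_1)$; by Lemma \ref{thm::gaussintegral12inhomogeneous} (together with $I_\pi(c_2,0)=0$ from Lemma \ref{thm::gaussintegral12}) it vanishes unless $\ord_\pp(c_1)=-1$, in which case it equals $\chi_\pp(2c_2-4c_1^2)$. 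When $\ord_\pp(c_1)\neq-1$ one checks directly, using that $m$ is a unit, that $8\alpha a+m\notin\pp^2$, so both sides vanish; and when $\ord_\pp(c_1)=-1$, writing $8a\alpha-m=2w$ with $w\in\oo^\times$, one has $2c_2-4c_1^2=a^{-2}(m-w^2)$ and $8\alpha a+m=2(w+m)$, so that $\chi_\pp(2c_2-4c_1^2)=1\iff m\equiv w^2\pmod\pp$, while $8\alpha a+m\in\pp^2\iff w\equiv m\pmod\pp$. Since $m\equiv1\pmod\pp$ and the Frobenius is injective on $\kappa$, these two conditions are equivalent, so the integral again equals $\chi_{\pp^2}(8\alpha a+m)$. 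Combining the two cases gives the claimed formula.

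The hard part is this last matching in the case $m\in1+\pp$: the Gauss-integral evaluation naturally produces the quadratic condition $m\equiv w^2\pmod\pp$, and one must recognize that it coincides with the linear congruence $8\alpha a+m\in\pp^2$. This is precisely where the hypothesis $m\in(1+\pp)\cup\pp$ enters, since for a general unit $m$ the square root of $m$ in $\kappa$ need not reduce to $1$ and the two conditions would genuinely differ; it is also the reason that only residue classes $0$ and $1$ for $m$ occur, these being exactly the values of $u^{2^f-1}\bmod\pp$.
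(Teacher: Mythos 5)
Your proof is correct, but it takes a different route from the paper's. The paper expands $\sigma$ in Teichm\"uller digits, $\sigma\equiv a+2b+4c\pmod{\pp^3}$ with $b,c\in U_0$, computes $\sigma^{2^f-1}$ modulo $\pp^3$ in terms of $b,c$, and then uses the trace identity (\ref{eqn::tracesquare}) to convert the quadratic digit term $mb^2/(2a^2)$ into the linear term $mb/(2a)$ (this is exactly where $m\in(1+\pp)\cup\pp$ is needed); after that the whole integrand is linear in $b$ and $c$, and two applications of the orthogonality relation in Lemma \ref{thm::sumcirclemethod} produce the two factors of $\chi_{\pp}$ that combine into $\chi_{\pp^2}(8\alpha a+m)$. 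You instead make the affine substitution $\sigma=a+2\tau$ with $\tau$ a continuous variable, keep the quadratic term, and reduce to the already-evaluated Gauss integral $I_{\pi}(c_2,c_1)$ via Lemmas \ref{thm::gaussintegral12} and \ref{thm::gaussintegral12inhomogeneous}; the hypothesis on $m$ then enters at the very end, when you match the quadratic condition $\chi_{\pp}(2c_2-4c_1^2)=\chi_{\pp}(m-w^2)$ against the linear condition $8\alpha a+m\in\pp^2$ using injectivity of Frobenius and $m\equiv1\pmod{\pp}$. I checked your congruence $\sigma^{2^f-1}\equiv1-2a^{-1}\tau+4a^{-2}\tau^2\pmod{\pp^3}$ (via $a^{2^f-1}=1$ and $\epsilon^{2^f}\in1+\pp^3$), the identification of $c_1,c_2$, and all three subcases of the endgame; they are all sound. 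Your version has the virtue of recycling the Gauss-integral lemmas and avoiding the trace identity here, while the paper's digit-sum method is the one that extends uniformly to the companion computation of $I_a^{\times}(\alpha,m,1)$ in Lemma \ref{thm::twistlinearintegral2J}, where the character $\eta(\sigma)$ is itself defined through the digits $b,c$ and a continuous substitution would be more awkward.
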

\begin{proof}
If $\sigma\equiv a+2b+4c\pmod{\pp^3}$ with $b,c\in U_0$, then we have
$$
\sigma^{2^f-1}\equiv1-\frac{2b}{a}+\frac{4b^2}{a^2}-\frac{4c}{a}\pmod{\pp^3}.
$$
It follows that
$$
e_{\pi}\left(\sigma^{2^f-1}m/8\right)=e_{\pi}\left(\frac{m}{8}+\frac{mb}{4a}+\frac{mc}{2a}\right).
$$
Therefore, we have
$$
I_a^{\times}(\alpha,m,0)=\frac{e_{\pi}(m/8)}{\norm(\pp)^3}\sum_{b,c\in U_0}e_{\pi}\left(\alpha(a+2b+4c)+\frac{mb}{4a}+\frac{mc}{2a}\right)\int_{\oo}e_{\pi}(8\alpha\sigma)\mathrm{d}\sigma.
$$
Using Lemma \ref{thm::padiccirclemethod} to evaluate the integral and then using Lemma \ref{thm::sumcirclemethod} to evaluate the sum over $c$, we have
$$
I_a^{\times}(\alpha,m,0)=\frac{\chi_{\pp}(8\alpha a+m)}{\norm(\pp)^2}e_{\pi}\left(\frac{8\alpha a+m}{8}\right)\sum_{b\in U_0}e_{\pi}\left(\frac{(8\alpha a+m)b}{4a}\right).
$$
If $8\alpha a+m\in\pp$, applying Lemma \ref{thm::sumcirclemethod} again gives
$$
I_a^{\times}(\alpha,m,0)=\frac{\chi_{\pp^2}(8\alpha a+m)}{\norm(\pp)}e_{\pi}\left(\frac{8\alpha a+m}{8}\right),
$$
as desired.
\end{proof}

\begin{lemma}
\label{thm::twistlinearintegral2J}
Suppose that $a\in U$, $\alpha\in K$, and $m\in(1+\pp)\cup\pp$. If $8\alpha\not\in\oo$, then we have $I_a^{\times}(\alpha,m,1)=0$. If $8\alpha\in\oo$, then we write $8\alpha\equiv\alpha_0+2\alpha_1\pmod{\pp^2}$ and $m\equiv m_0+2m_1\pmod{\pp^2}$ with $\alpha_0,\alpha_1,m_0,m_1\in U_0$. If $m\in1+\pp$, then we have
$$
I_a^{\times}(\alpha,m,1)=\frac{\chi_{\pp}(8\alpha)(-1)^{f-1}\eta(m)}{\norm(\pp)^{\frac{3}{2}}}e_{\pi}\left(\frac{(4\alpha+m\alpha_1)a}{4}\right).
$$
If $m\in\pp$, then we have
$$
I_a^{\times}(\alpha,m,1)=\frac{\chi_{1+\pp}(8\alpha a)(-1)^{f-1}\eta(8\alpha(1+m))}{\norm(\pp)^\frac{3}{2}}e_{\pi}\left(\frac{m_1\alpha_1}{2\alpha_0}\right).
$$
\end{lemma}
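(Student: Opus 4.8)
The plan is to follow the architecture of the proof of Lemma~\ref{thm::twistlinearintegral2I}, now tracking the extra factor $\eta(\sigma)$. Suppose first that $8\alpha\notin\oo$. Substituting $\sigma\mapsto\sigma+8y$ with $y\in\oo$: since $\eta$ factors through $\oo^{\times}/(1+\pp^3)$ we have $\eta(\sigma+8y)=\eta(\sigma)$; since $(\sigma+8y)^{2^f-1}-\sigma^{2^f-1}\in\pp^3$ and $m\in\oo$, the factor $e_{\pi}(\sigma^{2^f-1}m/8)$ is unchanged; and $e_{\pi}((\sigma+8y)\alpha)=e_{\pi}(8y\alpha)e_{\pi}(\sigma\alpha)$. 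Hence $I_a^{\times}(\alpha,m,1)=e_{\pi}(8y\alpha)I_a^{\times}(\alpha,m,1)$ for every $y\in\oo$, and choosing $y$ with $e_{\pi}(8y\alpha)\neq1$ (possible since $8\alpha\notin\oo$) forces $I_a^{\times}(\alpha,m,1)=0$.

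Assume now $8\alpha\in\oo$ and put $\beta\coloneqq8\alpha$. Writing $\sigma=a\tau$ with $\tau\in1+\pp$ and using $a^{2^f-1}=1$ and $\eta(a)=1$ (both immediate from the definitions), one gets $I_a^{\times}(\alpha,m,1)=I_1^{\times}(a\alpha,m,1)$; comparing the digit expansions of $8(a\alpha)=a\beta$ with those of $\beta$ then reduces the general-$a$ formulas to the case $a=1$, which we now assume. Partition $1+\pp$ into the cosets $1+2b+4c+\pp^3$ with $b,c\in U_0$. On such a coset $\eta(\sigma)=e_{\pi}((c-b)/2)$, $e_{\pi}(\sigma\alpha)=e_{\pi}(\beta/8+b\beta/4+c\beta/2)$, and -- exactly as in the proof of Lemma~\ref{thm::twistlinearintegral2I}, where \eqref{eqn::tracesquare} is used to absorb the quadratic-in-$b$ term -- $e_{\pi}(\sigma^{2^f-1}m/8)=e_{\pi}(m/8+mb/4+mc/2)$. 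Integrating over each coset,
\[
I_1^{\times}(\alpha,m,1)=\frac{e_{\pi}((\beta+m)/8)}{\norm(\pp)^3}\sum_{b,c\in U_0}e_{\pi}\!\left(\frac{c-b}{2}+\frac{b\beta}{4}+\frac{c\beta}{2}+\frac{mb}{4}+\frac{mc}{2}\right).
\]

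The exponent is affine-linear in $c$, namely $c\gamma/2$ with $\gamma\coloneqq1+m+\beta$, so Lemma~\ref{thm::sumcirclemethod} evaluates the inner sum over $c$ as $\norm(\pp)\chi_{\pp}(\gamma)$; since $1+m\equiv0$ or $1\pmod{\pp}$ according as $m\in1+\pp$ or $m\in\pp$, the condition $\gamma\in\pp$ becomes $\chi_{\pp}(8\alpha)$, respectively $\chi_{1+\pp}(8\alpha)$ (hence $\chi_{1+\pp}(8\alpha a)$ after undoing the reduction). Granting it, the remaining sum is $\sum_{b\in U_0}e_{\pi}(b\delta/4)$ with $\delta\coloneqq\beta+m-2$; the above congruences make $\delta$ a unit with $\delta\equiv1+2b_{\delta}\pmod{\pp^2}$, where in both cases $b_{\delta}$ is the Teichm\"uller representative of $\alpha_1+m_1+1\bmod\pp$. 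Lemma~\ref{thm::dyadicgausssum} then gives this sum as $(-1)^{f-1}\norm(\pp)^{1/2}e_{\pi}((1-2b_{\delta})/8)$, so that
\[
I_1^{\times}(\alpha,m,1)=\frac{(-1)^{f-1}\chi}{\norm(\pp)^{3/2}}\,e_{\pi}\!\left(\frac{\beta+m+1-2b_{\delta}}{8}\right),
\]
with $\chi$ the above characteristic function.

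It remains to rewrite this last exponential in the stated closed form. One expands $e_{\pi}(b_{\delta}/4)$ via Lemma~\ref{thm::teichmullersum} into $e_{\pi}$ of an explicit polynomial in $\alpha_1,m_1$, then uses \eqref{eqn::tracesquare} (in the form $e_{\pi}(x^2/2)=e_{\pi}(x/2)$ for $x\in U_0$) and the fact that $e_{\pi}(x/2)$ depends only on $x\bmod\pp$ for $x\in\oo$ to simplify; a parallel expansion of $\eta(m)$ (resp. $\eta(8\alpha(1+m))$) via the definition \eqref{eqn::defineeta} and Lemma~\ref{thm::teichmullersum} shows the two expressions coincide, yielding $\eta(m)\,e_{\pi}((4\alpha+m\alpha_1)a/4)$ when $m\in1+\pp$ (where $\alpha_0=0$) and $\eta(8\alpha(1+m))\,e_{\pi}(m_1\alpha_1/2\alpha_0)$ when $m\in\pp$ (where $\alpha_0$ is a unit). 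The degenerate subcases $\alpha_1=0$ or $m_1=0$, in which the three-term form of Lemma~\ref{thm::teichmullersum} is replaced by its two-term form or a direct computation, are routine. The \emph{main obstacle} is exactly this last step: although every tool is in hand, carrying the Teichm\"uller ``carries'' correctly modulo $\pp^3$ -- not merely modulo $\pp$ -- while reorganizing the factors $e_{\pi}(\,\cdot/4)$ and $e_{\pi}(\,\cdot/8)$ requires care, and it is precisely the interference of $\eta(\sigma)$ with the linear-in-$b$ part of $e_{\pi}(\sigma^{2^f-1}m/8)$ that makes the quadratic character reappear in the answer.
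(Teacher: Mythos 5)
Your proposal follows essentially the same route as the paper's proof: the same decomposition of $a+\pp$ into cosets modulo $\pp^3$, Lemmas~\ref{thm::padiccirclemethod} and~\ref{thm::sumcirclemethod} to produce the vanishing when $8\alpha\not\in\oo$ and the characteristic-function condition (the paper's $\chi_{\pp}(8\alpha a+m+1)$, your $\chi_{\pp}(\gamma)$), and then Lemma~\ref{thm::dyadicgausssum}, Lemma~\ref{thm::teichmullersum}, and the definition (\ref{eqn::defineeta}) of $\eta$ to evaluate and repackage the remaining sum over $b$; your preliminary reduction to $a=1$ is only a cosmetic variant, since the paper carries the unit $a$ through the computation directly. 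The final rewriting that you flag as the main obstacle is handled at a comparable level of brevity in the paper itself, so your sketch matches the published argument in both method and detail.
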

\begin{proof}
Using (\ref{eqn::defineeta}) and the same argument in the proof of Lemma \ref{thm::twistlinearintegral2I}, we see that if $8\alpha\not\in\oo$ then $I_a^{\times}(\alpha,m,1)=0$, and if $8\alpha\in\oo$, then we have
$$
I_a^{\times}(\alpha,m,1)=\frac{\chi_{\pp}(8\alpha a+m+1)}{\norm(\pp)^2}e_{\pi}\left(\frac{8\alpha a+m}{8}\right)\sum_{b\in U_0}e_{\pi}\left(\frac{(8\alpha a+m+2)b}{4a}\right).
$$
If $m\in1+\pp$, then we have $8\alpha a+m+1\in\pp$ if and only if $8\alpha\in\pp$. By Lemma \ref{thm::dyadicgausssum} and Lemma \ref{thm::teichmullersum}, we see that
$$
\sum_{b\in U_0}e_{\pi}\left(\frac{(8\alpha+m+2)b}{4a}\right)=\sum_{b\in U_0}e_{\pi}\left(\frac{(8\alpha a-m)b}{4}\right)=e_{\pi}\left(\frac{m\alpha_1a+m_1}{4}-\frac{1}{8}\right)(-1)^{f-1}\norm(\pp)^{\frac{1}{2}}.
$$
Using (\ref{eqn::defineeta}), we see that
$$
I_a^{\times}(\alpha,m,1)=\frac{\chi_{\pp}(8\alpha)(-1)^{f-1}\eta(m)}{\norm(\pp)^{\frac{3}{2}}}e_{\pi}\left(\frac{(4\alpha+m\alpha_1)a}{4}\right).
$$
Similarly, if $m\in\pp$, then we have $8\alpha a+m+1\in\pp$ if and only if $8\alpha a\in1+\pp$ and the sum over $b$ is equal to
$$
\sum_{b\in U_0}e_{\pi}\left(\frac{(8\alpha a+m+2)b}{4a}\right)=e_{\pi}\left(\frac{m_1+\alpha_1/\alpha_0+2m_1\alpha_1/\alpha_0}{4}\right)e_{\pi}\left(-\frac{1}{8}\right)(-1)^{f-1}\norm(\pp)^{\frac{1}{2}}.
$$
and by (\ref{eqn::defineeta}), we have
$$
I_a^{\times}(\alpha,m,1)=\frac{\chi_{1+\pp}(8\alpha a)(-1)^{f-1}\eta(8\alpha(1+m))}{\norm(\pp)^\frac{3}{2}}e_{\pi}\left(\frac{m_1\alpha_1}{2\alpha_0}\right),
$$
as desired.
\end{proof}

Now we state and prove a formula for local densities when $K/\QQ_2$ is unramified.

\begin{theorem}
\label{thm::localdensity2}
Suppose that $n\in\oo$ and $Q$ is an integral quadratic polynomial over $K$ of the form
\begin{align*}
Q(\mathbf{x},\mathbf{y},\mathbf{z})=\sum_{1\leq i\leq r_1}\left(b_ix_i^2+c_ix_i\right)&+\sum_{1\leq j\leq r_2}\left(b'_jy_{j,1}y_{j,2}+c'_{j,1}y_{j,1}+c'_{j,2}y_{j,2}\right)\\
&+\sum_{1\leq k\leq r_3}\left(b''_k(z_{k,1}^2+z_{k,1}z_{k,2}+\rho z_{k,2}^2)+c''_{k,1}z_{k,1}+c''_{k,2}z_{k,2}\right).
\end{align*}
For $1\leq i\leq r_1$, $1\leq j\leq r_2$, and $1\leq k\leq r_3$, we set $u_i\coloneqq\pi^{-\ord_{\pp}(b_i)}b_i$, $t_i\coloneqq\min(\ord_{\pp}(b_i),\ord_{\pp}(c_i))$, $t'_j\coloneqq\min(\ord_{\pp}(b'_j),\ord_{\pp}(c'_{j,1}),\ord_{\pp}(c'_{j,2}))$, and $t''_k\coloneqq\min(\ord_{\pp}(b''_k),\ord_{\pp}(c''_{k,1}),\ord_{\pp}(c''_{k,2}))$. Furthermore, we define the following sets:
\begin{align*}
D_{\pp}&\coloneqq\{1\leq i\leq r_1~|~\ord_{\pp}(b_i)>\ord_{\pp}(c_i)\},\\
E_{\pp}&\coloneqq\{1\leq i\leq r_1~|~\ord_{\pp}(b_i)=\ord_{\pp}(c_i)\},\\
N_{\pp}&\coloneqq\{1\leq i\leq r_1~|~\ord_{\pp}(b_i)<\ord_{\pp}(c_i)\},\\
D'_{\pp}\coloneqq\{1\leq &j\leq r_2~|~\ord_{\pp}(b'_j)>\min(\ord_{\pp}(c'_{j,1}),\ord_{\pp}(c'_{j,2}))\},\\
N'_{\pp}\coloneqq\{1\leq &j\leq r_2~|~\ord_{\pp}(b'_j)\leq\min(\ord_{\pp}(c'_{j,1}),\ord_{\pp}(c'_{j,2}))\},\\
D''_{\pp}\coloneqq\{1\leq &k\leq r_3~|~\ord_{\pp}(b''_k)>\min(\ord_{\pp}(c''_{k,1}),\ord_{\pp}(c''_{k,2}))\},\\
N''_{\pp}\coloneqq\{1\leq &k\leq r_3~|~\ord_{\pp}(b''_k)\leq\min(\ord_{\pp}(c''_{k,1}),\ord_{\pp}(c''_{k,2}))\},
\end{align*}
and set $\mathfrak{t}_d\coloneqq\min\{\{t_i~|~i\in D_{\pp}\}\cup\{t_i+1~|~i\in E_{\pp}\}\cup\{t'_j~|~j\in D'_{\pp}\}\cup\{t''_k~|~k\in D''_{\pp}\}\}$. For any $t\in\ZZ$, we define
$$
\mathcal{L}_{\pp}(t)\coloneqq\{i\in N_{\pp}~|~t_i-t+1<0\text{ is odd}\},\quad\ell_{\pp}(t)\coloneqq|\mathcal{L}_{\pp}(t)|.
$$
We further define
$$
\mathfrak{n}\coloneqq n+\sum_{i\in N_{\pp}}\frac{c_i^2}{4b_i}+\sum_{j\in N'_{\pp}}\frac{c'_{j,1}c'_{j,2}}{b'_j}+\sum_{k\in N''_{\pp}}\frac{\rho c_{k,1}''^2+c_{k,2}''^2-c''_{k,1}c''_{k,2}}{(4\rho-1)b''_k}.
$$
If $\mathfrak{n}\neq0$, we assume that $\mathfrak{n}=\mathfrak{u}_n\pi^{\mathfrak{t}_n}$ with $\mathfrak{u}_n\in\oo^{\times}$ and $\mathfrak{t}_n\in\ZZ$. Otherwise, we set $\mathfrak{t}_n\coloneqq\infty$. Then the local density of $Q$ at $\pp$ is given by
$$
\beta_{\pp}(n;Q)=1+\sum_{\substack{1\leq t\leq\min(\mathfrak{t}_d,\mathfrak{t}_n+3)\\t_i+1\neq t,i\in N_{\pp}}}\omega_{\pp}(t)\delta_{\pp}(t)\norm(\pp)^{\tau_{\pp}(t)-1},
$$
where $\tau_{\pp}(t)$ and $\delta_{\pp}(t)$ are given by
$$
\tau_{\pp}(t)\coloneqq t+\sum_{\substack{i\in N_{\pp}\\t_i+1<t}}\frac{t_i-t+1}{2}+\sum_{\substack{j\in N'_{\pp}\\t'_j<t}}(t'_j-t)+\sum_{\substack{k\in N''_{\pp}\\t''_k<t}}(t''_k-t),
$$
and
$$
\delta_{\pp}(t)\coloneqq\prod_{i\in\mathcal{L}_{\pp}(t)}(-1)^{f-1}\eta(u_i)\prod_{\substack{k\in N''_{\pp}\\t''_k-t<0\text{ odd}}}(-1)^{\trace_{K/\QQ_2}(\rho)}.
$$
For any integer $t\in\ZZ$, we set 
$$
U(t)\coloneqq\left\{u\in U~\bigg|~\frac{c_i}{\pi^{t-1}}\equiv\frac{ub_i^2}{\pi^{2t-2}}\pmod{\pp}\text{ for any }i\in E_{\pp}\text{ with }t_i+1=t\right\}\subseteq U.
$$
In case that $|U(t)|=1$, we denote by $u(t)$ the unique element in $U(t)$. Furthermore, we define
$$
m(t)\coloneqq\sum_{\substack{i\in N_{\pp}\\t_i+1<t}}u_i^{2^f-1}\in(1+\pp)\cup\pp.
$$
For any $1\leq t\leq\min(\mathfrak{t}_d,\mathfrak{t}_n+3)$, we write $-8\mathfrak{n}/\pi^t\equiv\mathfrak{n}_0(t)+2\mathfrak{n}_1(t)\pmod{\pp^2}$ and $m(t)\equiv m_0(t)+2m_1(t)\pmod{p^2}$ with $\mathfrak{n}_0(t),\mathfrak{n}_1(t)$, $m_0(t),m_1(t)\in U_0$. Then we define $\omega_{\pp}(t)$ as follows:
\begin{enumerate}
\item If $|U(t)|=0$, then $\omega_{\pp}(t)=0$.
\item If $\ell_{\pp}(t)$ is even and $|U(t)|=1$, then we have
$$
\omega_{\pp}(t)=\chi_{\pp^2}\left(m(t)-\frac{8u(t)\mathfrak{n}}{\pi^t}\right)e_{\pi}\left(\frac{m(t)}{8}-\frac{u(t)\mathfrak{n}}{\pi^t}\right).
$$
\item If $\ell_{\pp}(t)$ is even, $|U(t)|=|U|$, and $m(t)\in1+\pp$, then we have
$$
\omega_{\pp}(t)=\chi_{\pp^2}\left(\mathfrak{n}_0(t)m(t)-\frac{8\mathfrak{n}}{\pi^t}\right)e_{\pi}\left(\frac{m(t)}{8}-\frac{\mathfrak{n}}{\mathfrak{n}_0(t)\pi^t}\right).
$$
\item If $\ell_{\pp}(t)$ is even, $|U(t)|=|U|$, $m(t)\in\pp$, and $t\leq\mathfrak{t}_n+1$, then we have
$$
\omega_{\pp}(t)=\chi_{\pp^2}(m(t))e_{\pi}\left(\frac{m(t)}{8}\right)\left(\chi_{\pp^t}(\mathfrak{n})\norm(\pp)-1\right).
$$
\item If $\ell_{\pp}(t)$ is even, $|U(t)|=|U|$, $m(t)\in\pp$, and $t\geq\mathfrak{t}_n+2$, then we have
$$
\omega_{\pp}(t)=\chi_{\pi^{t-3}\oo^{\times}}\left(\frac{\mathfrak{n}}{m(t)}\right)e_{\pi}\left(\frac{m(t)}{8}-\frac{m_1(t)\mathfrak{n}}{\mathfrak{n}_1(t)\pi^t}\right).
$$
\item If $\ell_{\pp}(t)$ is odd, $|U(t)|=1$, and $m(t)\in1+\pp$, then we have
$$
\omega_{\pp}(t)=\frac{\chi_{\pp^{t-2}}(\mathfrak{n})(-1)^{f-1}\eta(m(t))}{\norm(\pp)^{\frac{1}{2}}}e_{\pi}\left(\frac{\mathfrak{n}_1(t)u(t)m(t)}{4}-\frac{u(t)\mathfrak{n}}{\pi^t}\right).
$$
\item If $\ell_{\pp}(t)$ is odd, $|U(t)|=|U|$, and $m(t)\in1+\pp$ then we have
$$
\omega_{\pp}(t)=\frac{\chi_{\pp^{t-2}}(\mathfrak{n})(-1)^{f-1}\eta(m(t))}{\norm(\pp)^{\frac{1}{2}}}\left(\chi_{\pp^2}\left(\mathfrak{n}_1(t)m(t)-\frac{4\mathfrak{n}}{\pi^t}\right)\norm(\pp)-1\right).
$$
\item If $\ell_{\pp}(t)$ is odd and $m(t)\in\pp$, then we have $\omega_{\pp}(t)=0$ if $\mathfrak{n}_0(t)^{-1}\not\in U(t)$. Otherwise we have
$$
\omega_{\pp}(t)=\frac{\chi_{\pi^{t-3}\oo^{\times}}(\mathfrak{n})(-1)^{f-1}}{\norm(\pp)^{\frac{1}{2}}}\eta\left(\frac{8\mathfrak{n}(1+m(t))}{\pi^t}\right)e_{\pi}\left(\frac{m_1(t)\mathfrak{n}_1(t)}{2\mathfrak{n}_0(t)}\right).
$$
\end{enumerate}
\end{theorem}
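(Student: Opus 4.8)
The plan is to follow the strategy used in the proof of Theorem \ref{thm::localdensityp}: decompose the defining integral according to the valuation of the outer variable $\sigma$, evaluate the inner Gaussian integrals on each shell by the lemmas proved above, and then evaluate the remaining integral over $\oo^{\times}$. By Lemma \ref{thm::jordan}(2) and the identity $\beta_{\pp}(n;Q)=\beta_{\pp}(n-c;Q-c)$ for $c\in\oo$, we may assume $Q$ has the displayed shape with zero constant term; as recorded just before the statement, this yields the factorization of $\beta_{\pp}(n;Q)$ as an integral over $K$ of $e_{\pi}(-\sigma n)$ times a product of the local factors $I_{\pi}$, $I'_{\pi}$, $I''_{\pi}$. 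Splitting $K=\oo\sqcup\bigsqcup_{t\geq1}\pi^{-t}\oo^{\times}$ and substituting $\sigma=u\pi^{-t}$ on the $t$-th shell (where $\mathrm{d}\sigma=\norm(\pp)^{t}\,\mathrm{d}u$) gives $\beta_{\pp}(n;Q)=1+\sum_{t\geq1}\norm(\pp)^{t}\int_{\oo^{\times}}(\cdots)\,\mathrm{d}u$, and the proof reduces to analyzing the $t$-th summand.

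Fix $t\geq1$. I would evaluate each local factor on the $t$-th shell using Lemma \ref{thm::gaussintegral12inhomogeneous}, Lemma \ref{thm::gaussintegral2inhomegeneous}, and Lemma \ref{thm::gaussintegral3inhomegeneous}. Three effects shrink the range of $t$ and reshape the integrand. A block indexed by $D_{\pp}\cup D'_{\pp}\cup D''_{\pp}$, or by $E_{\pp}$ with $t_i+1<t$, makes the integrand vanish once $t$ exceeds the corresponding threshold, so only $t\leq\mathfrak{t}_d$ contribute. A block $i\in E_{\pp}$ with $t_i+1=t$ contributes instead the indicator of a congruence on $u$ modulo $\pp$; intersecting these conditions over all such $i$ produces exactly the set $U(t)$, which is empty, a singleton, or all of $U$. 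A block $i\in N_{\pp}$ with $t_i+1=t$ has its $I_{\pi}$ vanish by the order-$(-1)$ case of Lemma \ref{thm::gaussintegral12}, which is why the stated sum omits those $t$. For the surviving $t$, the blocks in $N_{\pp}$, $N'_{\pp}$, $N''_{\pp}$ below the threshold contribute, after completing the square, the phases $e_{\pi}(-\sigma c_i^2/(4b_i))$, $e_{\pi}(-\sigma c'_{j,1}c'_{j,2}/b'_j)$, $e_{\pi}(-\sigma(\rho(c''_{k,1})^2+(c''_{k,2})^2-c''_{k,1}c''_{k,2})/((4\rho-1)b''_k))$, each times a power of $\norm(\pp)$ and, for the $N_{\pp}$- and $N''_{\pp}$-blocks, the sign factors $\eta(uu_i)$ (for $i\in\mathcal{L}_{\pp}(t)$), $(-1)^{f-1}$, and $(-1)^{\trace_{K/\QQ_2}(\rho)}$; the omitted blocks contribute trivially to $e_{\pi}(-\sigma\,\cdot\,)$, so modulo $\pp^{t}$ the accumulated phase plus $n$ equals $\mathfrak{n}$. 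Using that $\eta$ is a quadratic character (Lemma \ref{thm::character}) to extract the $u$-independent part $\delta_{\pp}(t)$ from $\prod\eta(uu_i)$, and collecting all powers of $\norm(\pp)$ into $\norm(\pp)^{\tau_{\pp}(t)}$, the $t$-th summand becomes $\delta_{\pp}(t)\norm(\pp)^{\tau_{\pp}(t)}\sum_{a\in U(t)}I_a^{\times}(-\mathfrak{n}/\pi^{t},m(t),\ell_{\pp}(t))$, with $I_a^{\times}$ the integral introduced before Lemma \ref{thm::twistlinearintegral2I} and $m(t)$ appearing because $\prod_i e_{\pi}((uu_i)^{2^f-1}/8)=e_{\pi}(u^{2^f-1}m(t)/8)$.

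It remains to evaluate this inner expression. Since $\eta^{\ell}$ depends only on $\ell\bmod2$, I would apply Lemma \ref{thm::twistlinearintegral2I} when $\ell_{\pp}(t)$ is even and Lemma \ref{thm::twistlinearintegral2J} when $\ell_{\pp}(t)$ is odd; when $|U(t)|=|U|$ one further evaluates the finite sum over $a\in U$, which is either a single surviving term or a full orthogonality sum handled by Lemma \ref{thm::sumcirclemethod}. Matching the outputs with the Teichm\"uller digits $\mathfrak{n}_0(t),\mathfrak{n}_1(t)$ of $-8\mathfrak{n}/\pi^{t}$ and $m_0(t),m_1(t)$ of $m(t)$ gives the eight cases for $\omega_{\pp}(t)$: the trichotomy $|U(t)|\in\{0,1,|U|\}$, the split $m(t)\in1+\pp$ versus $m(t)\in\pp$ coming from the two regimes of Lemma \ref{thm::twistlinearintegral2J} and its support condition on $8\alpha$, and the split $t\leq\mathfrak{t}_n+1$ versus $t\geq\mathfrak{t}_n+2$ coming from $\ord_{\pp}(8\mathfrak{n}/\pi^{t})=3+\mathfrak{t}_n-t$. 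The same valuation count shows $I_a^{\times}(-\mathfrak{n}/\pi^{t},m(t),\ell_{\pp}(t))=0$ as soon as $t>\mathfrak{t}_n+3$ (vacuous when $\mathfrak{n}=0$), so together with $t\leq\mathfrak{t}_d$ the sum runs over $1\leq t\leq\min(\mathfrak{t}_d,\mathfrak{t}_n+3)$; assembling the pieces gives the claimed formula, and in particular $\beta_{\pp}(n;Q)=1$ when this range is empty.

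The \emph{main obstacle} is this last step: the purely computational bookkeeping underlying the definition of $\omega_{\pp}(t)$. One must track how the normalizations $\norm(\pp)^{-1}$, $\norm(\pp)^{-3/2}$, and $\norm(\pp)^{1/2}$ in Lemmas \ref{thm::twistlinearintegral2I} and \ref{thm::twistlinearintegral2J} combine with $\norm(\pp)^{\tau_{\pp}(t)}$ to give the exponent $\tau_{\pp}(t)-1$; identify the digits occurring in those lemmas with $\mathfrak{n}_0(t),\mathfrak{n}_1(t),m_0(t),m_1(t)$; and verify that the sums over $a\in U$ collapse to the uniform expressions $\chi_{\pp^{t}}(\mathfrak{n})\norm(\pp)-1$ and $\chi_{\pp^{2}}(\mathfrak{n}_1(t)m(t)-4\mathfrak{n}/\pi^{t})\norm(\pp)-1$ of cases (4), (5), and (7), including the degenerate coincidences when $f=1$ (so $|U|=1$, and a constrained and an unconstrained $U(t)$ agree). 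By contrast, the delicate evaluation of the Gaussian integrals themselves — especially $I''_{\pi}$, where one completes the square in two variables, inverts the unit $4\rho-1$, and appeals to the dyadic Gauss-sum evaluations of Gurak (Lemmas \ref{thm::dyadicinhomogeneous} and \ref{thm::dyadicgausssum}) — is already packaged in the preceding lemmas and does not recur here.
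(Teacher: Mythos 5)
Your proposal follows essentially the same route as the paper's proof: decompose over the shells $\pi^{-t}\oo^{\times}$, evaluate the inner Gauss integrals via Lemmas \ref{thm::gaussintegral12inhomogeneous}, \ref{thm::gaussintegral2inhomegeneous}, and \ref{thm::gaussintegral3inhomegeneous} to reduce the $t$-th summand to $\delta_{\pp}(t)\norm(\pp)^{\tau_{\pp}(t)}\sum_{a\in U(t)}I_a^{\times}(-\mathfrak{n}/\pi^t,m(t),\ell_{\pp}(t))$, truncate to $t\leq\min(\mathfrak{t}_d,\mathfrak{t}_n+3)$ using the support condition on $I_a^{\times}$, and then extract $\omega_{\pp}(t)/\norm(\pp)$ case by case from Lemmas \ref{thm::dyadicgausssum}, \ref{thm::twistlinearintegral2I}, and \ref{thm::twistlinearintegral2J}. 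The paper compresses the final case analysis into ``a straightforward computation,'' which is exactly the bookkeeping you correctly identify as the main remaining work.
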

\begin{proof}
By Lemma \ref{thm::gaussintegral12inhomogeneous}, Lemma \ref{thm::gaussintegral2inhomegeneous}, and Lemma \ref{thm::gaussintegral3inhomegeneous}, we have
$$
\beta_{\pp}(n;Q)=1+\sum_{\substack{1\leq t\leq\mathfrak{t}_d\\t_i+1\neq t,i\in N_{\pp}}}\delta_{\pp}(t)\norm(\pp)^{\tau_{\pp}(t)}\sum_{a\in U(t)}I_a^{\times}\left(-\frac{\mathfrak{n}}{\pi^t},m(t),\ell_{\pp}(t)\right).
$$
Since $I_a^{\times}\left(\alpha,m,\ell\right)=0$ if $8\alpha\not\in\oo$, we have
$$
\beta_{\pp}(n;Q)=1+\sum_{\substack{1\leq t\leq\min(\mathfrak{t}_d,\mathfrak{t}_n+3)\\t_i+1\neq t,i\in N_{\pp}}}\delta_{\pp}(t)\norm(\pp)^{\tau_{\pp}(t)}\sum_{a\in U(t)}I_a^{\times}\left(-\frac{\mathfrak{n}}{\pi^t},m(t),\ell_{\pp}(t)\right).
$$
Using Lemma \ref{thm::dyadicgausssum}, Lemma \ref{thm::twistlinearintegral2I}, and Lemma \ref{thm::twistlinearintegral2J}, it follows from a straightforward computation that
$$
\sum_{a\in U(t)}I_a^{\times}\left(-\frac{\mathfrak{n}}{\pi^t},m(t),\ell_{\pp}(t)\right)=\frac{\omega_{\pp}(t)}{\norm(\pp)},
$$
which finishes the proof.
\end{proof}
\begin{remark}
When $Q$ is a quadratic form, we have $E_{\pp}=N_{\pp}=N'_{\pp}=N''_{\pp}=\emptyset$. Thus it is understood that $\mathfrak{t}_d\coloneqq\infty$. In this case, we have $|U(t)|=|U|$ for any $t\in\ZZ$ and $\mathfrak{n}=n$. Therefore, one has a much simpler formula for the local density of a quadratic form. 
\end{remark}

\bibliographystyle{plain}
\bibliography{reference}

\end{document}